\documentclass[11pt,reqno]{amsart}

\usepackage{amsrefs}
\usepackage{amssymb}
\usepackage{fancyhdr}
\usepackage{bbm}
\usepackage{xcolor}
\usepackage{colonequals}
\usepackage{multirow}
\usepackage{rotating}
\usepackage{hyperref}
\usepackage{fullpage}
\hypersetup{
  colorlinks=true,
  linkcolor=blue,
  citecolor=blue
}
\usepackage{mleftright}

\newtheorem{theorem}{Theorem}
\newtheorem*{theorem*}{Theorem}
\newtheorem{lemma}[theorem]{Lemma}

\newtheorem{remark}[theorem]{Remark}

\newtheorem{conj}[theorem]{Conjecture}

\theoremstyle{definition}
\newtheorem{definition}[theorem]{Definition}

\numberwithin{equation}{section}
\numberwithin{theorem}{section}

\newcommand{\A}{\mathcal{A}}
\newcommand{\B}{\mathcal{B}}
\newcommand{\C}{\mathcal{C}}
\newcommand{\gammaG}{\gamma}
\newcommand{\coeff}[2]{\left[#1\right] #2}

\newcommand{\R}{\mathbb{R}}

\newcommand{\embolden}[1]{\textbf {#1}}
\renewcommand{\epsilon}{\varepsilon}

\title{Trees and Graphs with Non Log-concave Dominating Set Sequence via AI Tools}
\author{Alina Du, Steven Heilman, Greta Panova}
\date{\today}
\thanks{
Email: alinadu@usc.edu, stevenmheilman@gmail.com, gpanova@usc.edu\\
S.H. is supported by NSF Grant CCF AF 2448108.  G.P. is supported by NSF Grant CCF AF 2302174.  \\
MSC Classification: 68T05, 05C69, 05C05\\
Keywords: Transformer, AI, Dominating set, graph, tree\\
Department of Mathematics, University of Southern California, Los Angeles, CA 90089}

\begin{document}

\begin{abstract}
We give new examples of graphs and trees with dominating set sequences that are not log-concave.  These examples were generated by PatternBoost, a transformer-based reinforcement learning software developed by Charton-Ellenberg-Wagner-Williamson.  We also show: for any positive integer $m$, there exists a tree whose dominating set sequence is not log-concave for at least $m$ indices by modifying a similar construction of Bautista-Ramos for the independent set sequence. We show that a large class of caterpillar graphs has log-concave dominating set sequences. A continuous analogue of the sequence is also log-concave for all graphs.  
\end{abstract}

\maketitle

\section{Introduction}

Let $G$ be a finite undirected graph on $n$ vertices.  A dominating set in a graph is a set of vertices which, together with their neighbors, is equal to all vertices in the graph. In other words, every vertex of the graph is either in the set or is a neighbor of a vertex from that set.  For any $0\leq j\leq n$, let $d_{j}(G)$ be the number of dominating sets in $G$ with cardinality $j$.

The following was conjectured in \cite{ali14}.
\begin{conj}\label{conj0}
For any finite graph $G$, the dominating set sequence $d_{0}(G),\ldots,d_{n}(G)$ is unimodal.  That is, there exists some $0\leq m\leq n$ such that
\[
d_{0}(G)\leq d_{1}(G)\leq\cdots\leq d_{m}(G)\geq d_{m+1}(G)\geq\cdots\geq d_{n}(G).
\]
\end{conj}
This conjecture remains widely open even for trees.  Since such a basic question remains unanswered, one might reasonably question whether Conjecture \ref{conj0} is true or not.

There are not many available techniques to prove unimodality, especially in general settings without algebraic structure. A stronger property, which implies unimodality, is log-concavity, which enjoys some better developed technology including properties like real-rootedness of the corresponding polynomial, Lorentzian properties, injective maps, etc.

 The dominating set sequence is log-concave if
\[
d_{k}(G)^{2}\geq d_{k+1}(G)d_{k-1}(G),\qquad\forall\,1\leq k\leq n-1.
\]
In the attempt to disprove Conjecture~\ref{conj0}, one may try to first disprove the stronger log-concavity property, find non log-concave families of graphs with patterns and try to amplify the structures. Alternatively, for certain families of graphs, one may try to prove log-concavity as a stronger but more technologically feasible property that implies unimodality.

In this paper we produce new families of trees and examples of more general graphs with non log-concave dominating set sequence. We produce these counterexamples using PatternBoost \cite{charton24}, which is a transformer-based reinforcement learning technology.

We also show that legged caterpillars (caterpillars with at least one leaf attached to each spine vertex) have log-concave dominating set sequence in Theorem~\ref{thm:cat}. We give a general polytope approach which shows that the continuous version of this problem is true in Theorem~\ref{thm:polytope}, showing that the failures are a discrete phenomenon related to integer point counts. We also find a parametrized family of trees that breaks log-concavity at arbitrarily many places in Theorem~\ref{main2}.

\subsection{Background and motivation}

The study of unimodality and log-concavity is especially compelling because such properties frequently reflect deep underlying structure: for example, log-concavity is closely tied to negative dependence (anti-correlation) and algebraic or geometric phenomena in a wide range of settings, including matroid theory and graph polynomials (see e.g. \cite{karim18}).

Very closely related to unimodality of dominating sets is the analogous problem for independent sets. However, already in their early work, Alavi, Erdős, Malde and Schwenk~\cite{alavi87} observed that the sequence of independent sets of a graph can have an arbitrary ascent/descent pattern. They conjectured that for trees this sequence remains unimodal, perhaps since non-unimodal graphs are very dense, unlike trees. Independent sets of trees have received significant attention over the past 10 or so years, with strong evidence for unimodality~\cite{galvin2018}, but also many breaking points of log-concavity recently discovered with PatternBoost~\cite{ramos25}.

However, for dominating sets the situation remains much less understood.

The smallest graph with a non log-concave dominating set sequence has 9 vertices \cite{beaton22}.  
\begin{figure}[ht]
    \centering
    \includegraphics[width=0.3\textwidth]{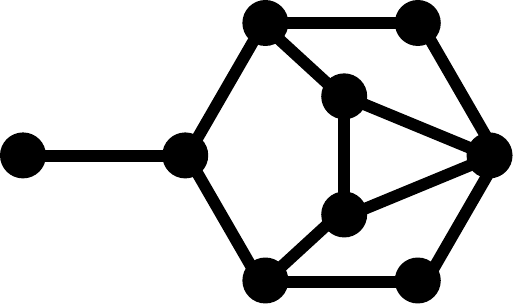}
    \caption{The unique graph on $9$ vertices with non log-concave dominating set sequence: $0,0,1,7,50,89,75,35,9,1$}
    \label{fig0}
\end{figure}

We are not aware of other published examples of such non-tree graphs.

Alikhani and Peng \cite{ali14} showed that for any $n$-vertex graph $G$,
\[
d_{0}(G)\leq\cdots\leq d_{\lceil n/2\rceil}(G).
\]

Beaton showed \cite{beaton21} that for any $n$-vertex graph without isolated vertices,
\[
d_{\lfloor 3n/4\rfloor-1}(G)\geq\cdots\geq d_{n}.
\]

It was also shown in \cite{beaton22}*{Theorem 3.2} that, if a graph $G$ on $n$ vertices has all degrees larger than $2\log_{2}n$, then $G$ has unimodal dominating set sequence with mode $\lfloor n/2\rfloor$.  That is, any counterexample to Conjecture \ref{conj0} with $n$ vertices must have a vertex with degree smaller than $2\log_{2}n$.  In this sense trees could be considered a ``difficult'' case of Conjecture \ref{conj0}, since any tree has at least two degree one vertices.  

One might ask whether random graphs exhibit log-concavity.  Dense Erd\H{o}s-R\'{e}nyi graphs have unimodal dominating set sequence with probability tending to $1$ as $n\to\infty$ (when the probability $p\in(0,1)$ of any edge appearing is fixed) \cite{beaton22}*{Theorem 3.3}.  This follows readily from Hoeffding's inequality and the $2\log_{2}n$ unimodality result, since such a random graph satisfies this degree lower bound with high probability.  For this reason, the argument of \cite{beaton22} does not extend to sparse Erd\H{o}s-R\'{e}nyi graphs, where the probability of any edge appearing goes to zero as $n\to\infty$.  In this regime, perhaps the corresponding result for independent sets could be extended to dominating sets \cite{heilman25}. 

For a while it was an open problem to find a tree whose dominating set sequence was not log-concave.  Such an example was produced in \cite{beaton24}, inspired by a tree whose independent set sequence is not log-concave \cite{kadrawi23}.  Interestingly, PatternBoost also found these same tree examples (see Section \ref{sectree}).

It was also shown in \cite{beaton24}*{Theorems 6 and 16} that, if $T$ is a tree with $n$ vertices, then
\begin{equation}\label{dtreeresults}
d_{\gamma(T)}\leq\cdots\leq d_{\lfloor(n+2\gamma(T)+1)/3\rfloor}
,\qquad 
d_{\lceil(n+2\Gamma(T)-2)/3\rceil}\geq\cdots\geq d_{n}
\end{equation}
where $\Gamma(T)$ is the size of the largest minimal dominating set in $T$, and $\gamma(T)$ is the size of the smallest dominating set in $T$.

It seems plausible that random trees would exhibit some log-concavity with high probability (by analogy with independent sets \cites{heilman25,basit21}), though we are not aware of any results of this type.

\subsection{Proofs and counterexamples}
Here we explore aspects of Conjecture~\ref{conj0}. In Section~\ref{sec:instances} we provide a geometric construction interpreting the numbers $d_j(G)$ as integer point counts in polytopes and show that their approximation via volumes gives a log-concave sequence.  We also show that a large class of caterpillar graphs we call \emph{legged caterpillars} have log-concave domination polynomials in Theorem~\ref{thm:cat}.

We produce new examples of trees and graphs with non log-concave dominating set sequence, using PatternBoost \cite{charton24}.  See Sections \ref{secgraph} and \ref{sectree} for examples of these graphs and trees.  

The application of this transformer-based reinforcement learning technology is significant since it elaborates upon known counterexamples for a difficult graph theoretic problem (Conjecture \ref{conj0}) with an automated method that does not use any explicit knowledge of the mathematical literature.  Unlike approaches to mathematics based on LLMs that use vast amounts of training data (including research papers, websites with mathematical content, etc.), PatternBoost is a reinforcement learning optimization algorithm that generates its own training data with no access to the internet or existing mathematical literature.  These examples are thus generated de novo, although some thought is needed to initialize the algorithm correctly.

Our code is available at the following GitHub link:
\begin{center}
\href{https://github.com/sheilman77/dominating_sets_PatternBoost}{https://github.com/sheilman77/dominating\underline{ }sets\underline{ }PatternBoost}
\end{center}

Lastly, without using PatternBoost, we modify a construction of \cite{bautista25} to produce a family of trees whose dominating set sequence can fail log-concavity at an arbitrarily large number of indices.

\begin{theorem}\label{main2}
For any positive integer $m\geq1$, there exists a finite tree $T=W_{m}$ such that
\[
d_{\gamma+2j+1}^{2}(T)<d_{\gamma+2j}(T)d_{\gamma+2j+2}(T),\qquad\forall\,0\leq j\leq m-1.
\]
\end{theorem}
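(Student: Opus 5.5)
Our plan is to build $W_m$ so that its domination polynomial $D(T,x)=\sum_j d_j(T)x^j$ factors, up to a dominant term, as a power of $x^{\gamma}$ times a polynomial that is essentially concentrated on even exponents near the bottom. This parity imbalance forces the failure of log-concavity at the odd positions $\gamma+1,\gamma+3,\dots,\gamma+2m-1$. Following the idea of Bautista-Ramos for independent sets, we take a large number $N$ of copies of a small gadget whose dominating sets near the minimum size come in ``jumps of two''. We attach them all to a central structure, and let $N\to\infty$ (with $m$ fixed) so that the lowest coefficients are dominated by the gadget contributions.

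The key tool is multiplicativity of the domination polynomial. For a pendant gadget, we condition on the state of its attaching vertex: the vertex is in the set, dominated from inside the gadget, or needing domination from outside. This gives $D(T,x)$ as a sum of products of three gadget polynomials. For the gadget we will try a vertex $v$ with $k$ pendant paths $P_2$ (a ``spider'' $S_{2,\dots,2}$). Each leg has vertices $u_i$ (adjacent to $v$) and leaf $w_i$. Any dominating set must contain $u_i$ or $w_i$, and a minimal-size choice takes exactly one of them. Adding both costs $+1$, and adding $v$ costs $+1$. I would then modify the gadget, e.g.\ by joining two legs' leaves or using $P_3$ legs, so that the only cheap increments are $+2$. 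The goal is that the gadget polynomial has the form $x^{a}(c_0+c_2x^2+\text{small}\cdot x+\dots)$ with $c_1$ negligible relative to $c_0,c_2$ as a parameter grows. Taking $N$ gadgets then gives a polynomial in which the low coefficients behave like $\binom{N}{j}c_0^{N-j}c_2^{j}$ on even offsets and are comparatively tiny on odd offsets. Concretely, if $D(T,x)=x^{\gamma}\bigl(A(x^2)+\epsilon B(x)\bigr)$, then for each $j<m$ we get $d_{\gamma+2j+1}^2=O(\epsilon^2)$, while $d_{\gamma+2j}d_{\gamma+2j+2}$ is bounded below by a positive constant. Choosing parameters so that $\epsilon\to0$ gives all $m$ inequalities.

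The steps, in order, are as follows. First, design the gadget and compute its three conditional polynomials explicitly. Second, write $D(W_m,x)$ in closed form via the product formula over the root's state. Third, extract the coefficients at offsets $0,\dots,2m$ from $\gamma$ as explicit sums; these are finite since $m$ is fixed. Fourth, compare the coefficient ratios asymptotically in the free parameter (number of legs or number of gadgets), proving strict inequality once the parameter is large enough depending on $m$.

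The main obstacle is the gadget design. In trees, one-step increments are always available: adding any non-chosen vertex to a dominating set keeps it dominating. So $d_{\gamma+1}\ge$ something like $(n-\gamma)d_\gamma/\,(\gamma+1)$, and odd coefficients cannot genuinely be negligible. The real comparison must therefore be quantitative. The even-offset terms must gain a combinatorial factor, such as a binomial in a growing number $k$ of symmetric choices, that outpaces the trivial $+1$ additions. I would first try to mirror Bautista-Ramos's choice exactly, replacing their independent-set gadget with one in which each gadget has many minimum dominating sets related by swaps that change size by $2$ (e.g.\ two legs' leaf-choices coupled). I would then verify numerically for small $m$ before carrying out the asymptotic comparison.
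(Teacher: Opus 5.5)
Your Steps 2--4 (state recurrences, closed form, coefficient extraction, asymptotics) are the right template, but there is a genuine gap. All of the content is in Step 1, which you leave open, and the asymptotic regime you build around it points the wrong way.

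\textbf{Sending the number of gadgets to infinity gives log-concavity.} Suppose each gadget contributes a normalized factor $c_0+c_1x+c_2x^2+\cdots$ with $c_1>0$, which, as you observe, cannot be avoided. For each fixed $r$,
\[
[x^r](c_0+c_1x+\cdots)^N\sim\binom{N}{r}c_0^{N-r}c_1^{r}\qquad(N\to\infty).
\]
Indeed, $r$ single steps can be spread over the gadgets in $\asymp N^{r}$ ways, while any configuration using a step of size at least $2$ has only $O(N^{r-1})$ placements. Hence the normalized coefficients satisfy $e_r^2/(e_{r-1}e_{r+1})\to(r+1)/r>1$. Your ansatz $\binom{N}{j}c_0^{N-j}c_2^{j}$ for the even offsets is therefore false in this regime. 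The number of gadgets must stay fixed; the paper uses exactly $m$, which is also what caps the failures at offsets $1,3,\dots,2m-1$. The growing parameter must live inside each gadget. Concretely, you need one gadget whose normalized factor has $c_0c_2/c_1^2\to\infty$, i.e.\ a gadget that is already strongly non-log-concave at its first index. Your last paragraph gestures at this, but nothing in the proposal supplies it.

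\textbf{None of your candidate gadgets works.}
\begin{itemize}
\item For the spider with $k$ pendant $P_2$'s whose parent is chosen, the factor is $(1+x)(2+x)^k$, which is real-rooted. When the parent is not chosen, the factor is $(1+x)(2+x)^k-1$, with $c_0c_2/c_1^2\to\frac12$.
\item Joining two legs' leaves creates a cycle, so it leaves the class of trees.
\item A vertex with $t$ pendant $P_3$'s (the paper's $F_t$) keeps $c_0c_2/c_1^2$ bounded in either context. When its parent is chosen the factor is $(1+3x+x^2)^t+x(2+3x+x^2)^t$, with $c_1\approx 2^t$ but only $c_2\approx\frac32 t\,2^t$.
\end{itemize}

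\textbf{The missing idea is a second level.} Put three such fans under a common vertex; this is the paper's $H_t$.
\begin{itemize}
\item Its minimum configuration (the common vertex plus all middle vertices of the legs) is unique.
\item Every net $+1$ change must keep the common vertex and can unlock only one fan. That gives a factor of about $2^{t}$: take that fan's centre, and then each of its $t$ legs has two choices.
\item The net $+2$ change ``drop the common vertex, take all three fan centres'' unlocks $2^{3t}$.
\end{itemize}
So $c_0=1$, $c_1\asymp 2^t$ and $c_2\asymp 2^{3t}$.

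Now hang $m$ copies from a root, together with a pendant edge $ab$ at the root. Taking $a$ instead of $b$ dominates the root at no extra cost even when all $m$ gadgets are switched; without this, the case $j=m-1$ is lost. One then obtains
\[
e_r=2^{(r+\lfloor r/2\rfloor)t}\,t^{O(1)}\quad(0\le r\le 2m),
\qquad\text{whence}\qquad
\frac{e_{2j}e_{2j+2}}{e_{2j+1}^{2}}=2^{t}t^{O(1)}\to\infty\quad(0\le j\le m-1).
\]
Carrying this out also requires checking, via the three-state recurrences, that neither the subtracted term $\prod_i\mathcal{B}_{S_i}$ nor the root-chosen contributions change these exponential orders.
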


For a picture of this tree, see Figure \ref{fig:Wmt}.  This example therefore elaborates upon the example produced in \cite{beaton24} of a tree whose dominating set sequence is not log-concave at one index.  However, Theorem \ref{main2} does not provide a path toward disproving Conjecture \ref{conj0} due to \eqref{dtreeresults}.

\section{Computational methods}

\subsection{Transformer Definition}\label{sec:transformer}
PatternBoost is a transformer-based neural network.  In this section, we therefore define a transformer.

Let $\beta>0$.  For any $x\in\R^{n}$, define the \textbf{softmax} function $p=p^{(\beta)}\colon\R^{n}\to\R^{n}$ by
\[
p_{i}(x)\colonequals\frac{e^{\beta x_{i}}}{\sum_{j=1}^{n}e^{\beta x_{j}}},\qquad\forall\,x\in\R^{n},\qquad\forall\,1\leq i\leq n.
\]

The quantity $1/\beta$ is called the \textbf{temperature} of the softmax function.

Below, we consider a function $f\colon\R^{m\times n}\to\R^{m\times n}$, i.e. $f$ is a function from $m\times n$ real matrices to $m\times n$ real matrices.  We denote the $i^{th}$ row of an $m\times n$ matrix $M$ as $M^{(i)}$, for each $1\leq i\leq m$.

\begin{definition}[\embolden{Transformer}]\label{transdef}
Let $A_{1},\ldots,A_{k},B_{1},\ldots,B_{k}$ be $n\times n$ real matrices.  A \textbf{multi-head attention layer} or \textbf{transformer} with $k$ \textbf{attention heads} is a function $f\colon \R^{m\times n}\to\R^{m\times n}$ defined by
\[
[f(Y)]^{(i)}\colonequals\sum_{j=1}^{k}p([Y A_{j} Y^{T}]^{(i)})YB_{j},\qquad\forall\, Y\in\R^{m\times n}.
\]
\end{definition}
The matrices $A_{1},\ldots,A_{k}$ are called \textbf{attention matrices}, and the matrices $B_{1},\ldots,B_{k}$ are thought of as projection matrices, though they will in general not be actual projection matrices.  Each of the $k$ terms in the sum in Definition \ref{transdef} is called an attention head.

\subsection{Background on PatternBoost}\label{secpattern}
PatternBoost works in the following way.  The input is: a reward function, a greedy local search algorithm, and a choice of representation of the object being maximized.  The output is a set of candidate maximizing objects.  In more detail:

\begin{itemize}
\item Initialize a reward function of graphs that we should maximize.  In our case, this function could be the following function of a graph $G$:
\begin{equation}\label{eq:reward}
d_{k+2}(G)d_{k}(G)-d_{k+1}(G)^{2},
\end{equation}
where $k$ is the size of the smallest dominating set in $G$.  (For a general graph $G$, the coefficients $d_{j}(G)$ cannot be computed efficiently, but they can be computed efficiently for trees.  See Section \ref{seccomp} for more details.)  

It is generally preferable that the function to maximize can be computed efficiently.
\item Initialize a greedy local search algorithm.  In our case, if we are optimizing over graphs, this means: given any graph, consider all ways to add or delete an edge, and choose one with the largest reward function (including the original graph). If we are instead optimizing over trees, we use the algorithm of \cite{ramos25}: connect two random vertices with an edge, consider all ways to subtract an edge from the resulting cycle, and among those choose a tree with the largest reward.
\item We represent graphs in our optimization as binary vectors containing the entries of the graph's adjacency matrix above the diagonal.  We represent trees in our optimization by Pr\"{u}fer codes.
\end{itemize}

After these initialization steps, the algorithm proceeds as follows.
\begin{itemize}
\item[(i)] Generate an initial list (dataset) of e.g. 50,000 graphs (or trees).  This list is made via a greedy local search, possibly with some (pseudo)random choices.
\item[(ii)] Train a transformer on the list of graphs (or trees).  More specifically, optimize the weights of the transformer using the Adam optimization method (or a variant thereof) with a loss function that fits the transformer to the initial dataset.  (We briefly define a transformer in Section \ref{sec:transformer}.)
\item[(iii)] Query the transformer to generate a new list (dataset) of e.g. 100,000 graphs (or trees).
\item[(iv)] Run the greedy local search on the new dataset.
\item[(v)] Retain the top 10\% of examples; iterate the previous three steps several times.
\end{itemize}

\subsection{Related Results}

We use the code of \cite{ramos25} who achieved similar results for the related problem for independence polynomials of trees.  Nevertheless, we made some changes to the codes of \cite{ramos25}.  Most notably, \cite{ramos25} chooses a reward function \eqref{eq:reward} (for the independent set sequence of a tree) where $k=n/2$ is a fixed function of the number of vertices of the graph.  We however found this was not necessary either for independent sets or for dominating sets.  That is, it suffices to choose the index $k$ to depend on the first nonzero index of the sequence.

Also, for trees we needed to adapt some results in the literature (albeit using standard techniques) to efficiently compute their dominating set sequences.  We describe this algorithm in Section \ref{seccomp}.

\subsection{Comparison with Exhaustive Search}

There are approximately $1.6\cdot 10^{11}$ nonisomorphic simple graphs on $12$ vertices.  Moreover, the computation of the number of dominating sets of fixed size is computationally intractable \cite{kotek17}.  So, an exhaustive search over such graphs together with computation of the dominating sets is infeasible.

The number of nonisomorphic unlabelled trees on $n$ vertices is asymptotically larger than $2.9^{n}$.  For example, there are approximately $1.4\cdot10^{10}$ such trees on $n=30$ vertices.  The number of dominating sets of a fixed size can be computed efficiently, but an exhaustive search over trees becomes intractable when $n$ is much larger than $20$.

In other words, PatternBoost does not seem to be performing a convoluted exhaustive search.

\subsection{Limitations}  Although PatternBoost is able to generate interesting examples for Conjecture \ref{conj0} (as it did in a similar problem for independent sets \cite{ramos25}), there are many problems for which PatternBoost fails to find interesting examples.  In the case of our problem, PatternBoost was unable to find a tree where log-concavity of the dominating set sequence breaks in at least two indices.  However, we know such examples exist due to Theorem \ref{main2}.  Typically an epoch for Patternboost running on an NVIDIA RTX Ada 5000 would be complete in 5 to 15 minutes.  So, running ten to twenty epochs, which was sufficient for one experimental run of PatternBoost for a graph or tree with a fixed number of vertices, could take one to several hours.  The full project, including testing runs, used about 24 to 48 hours of compute on this single GPU.  Since finding dominating sets on general graphs is inefficient, it is doubtful we could search for graphs with more than 15 to 20 vertices.  Similarly, it is doubtful we could search for trees with more than a few hundred vertices, even though their dominating set polynomials can be found efficiently.

\subsection{Conclusions from the Computations}

PatternBoost was able to generate examples of graphs and trees with non log-concave dominating set sequences de novo, replicating and extending the result of \cite{beaton24} for trees.  This result is interesting since PatternBoost generates its own data while not using the internet, any external training data, or any innate knowledge.  However, PatternBoost was unable to find a graph or tree with more than one log-concavity violation, which we were able to do ``by hand'' by adapting an argument of \cite{bautista25}.  Perhaps it is difficult in our implementation of PatternBoost to find such a highly structured tree as in Theorem \ref{main2} (and Figure \ref{fig:Wmt}).  So, our work does not rule out the possibility that PatternBoost could discover such trees, but at the present time this automated tool serves as an assistant rather than a replacement for the mathematician.

\section{Log-concave instances}\label{sec:instances}

\subsection{A Continuous Analogue}

Using the Brunn-Minkowski inequality, we demonstrate that a continuous version of the dominating set sequence of a graph is log-concave.

Let $G=(V,E)$ be an undirected graph with vertex set $V=\{1,\ldots,n\}$ and edge set $E$. Let $A$ be its adjacency matrix, i.e. $A_{i,j}=1$ if $\{i,j\}\in E$. Let $\mathbf{x}\colonequals[x_1,\ldots,x_n]^T$ and define the polytope $P(G)$ as
\[
P(G)\colonequals\{ \mathbf{x}\in[0,1]^n: (A\mathbf{x})_i+x_i \geq 1, i=1,\ldots,n\}.
\]
In other words, it consists of the points in the hypercube, such that $x_{i}+\sum_{j\in V\colon\{i,j\}\in E}x_{j}\geq1$ for every $i=1,\ldots,n$. 

\begin{lemma}
    We have that $d_k(G) = | P(G) \cap \{ \mathbf{x}\in\{0,1\}^n: x_1+\cdots+x_n=k\}|$.
\end{lemma}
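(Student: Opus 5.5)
We will prove the statement by an explicit bijection between subsets of $V$ and vertices of the cube $\{0,1\}^n$, namely the indicator-vector map. To a subset $S\subseteq V=\{1,\ldots,n\}$ we associate $\mathbf{x}^S\in\{0,1\}^n$ with $x^S_i=1$ if $i\in S$ and $x^S_i=0$ otherwise. This map is a bijection from the power set of $V$ onto $\{0,1\}^n$. Under it, $|S|=x^S_1+\cdots+x^S_n$, so subsets of cardinality $k$ correspond exactly to the points of $\{\mathbf{x}\in\{0,1\}^n: x_1+\cdots+x_n=k\}$.

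It then remains to check that $S$ is a dominating set if and only if $\mathbf{x}^S\in P(G)$. Since $\mathbf{x}^S\in\{0,1\}^n\subseteq[0,1]^n$, the cube constraint is automatic. For each $i$, we compute
\[
(A\mathbf{x}^S)_i+x^S_i=x^S_i+\sum_{j\colon\{i,j\}\in E}x^S_j=\bigl|S\cap N[i]\bigr|,
\]
where $N[i]$ denotes the closed neighborhood of $i$, i.e.\ $i$ together with its neighbors. Here we use that $A$ has zero diagonal, since the graph is simple. Because this quantity is a nonnegative integer, the inequality $(A\mathbf{x}^S)_i+x^S_i\geq1$ holds precisely when $S\cap N[i]\neq\emptyset$, that is, when $i$ lies in $S$ or has a neighbor in $S$. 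Requiring this for all $i$ is exactly the definition of $S$ being dominating.

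Combining the two observations, the bijection restricts to a bijection between dominating sets of size $k$ and $P(G)\cap\{\mathbf{x}\in\{0,1\}^n: x_1+\cdots+x_n=k\}$, which gives the claimed equality of cardinalities. There is no real obstacle here. The only point needing care is the identification of the linear constraint with the closed-neighborhood count, which relies on the zero diagonal of $A$ and on integrality turning ``$\geq1$'' into ``nonempty intersection''.
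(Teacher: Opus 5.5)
Your proposal is correct and follows the same route as the paper: identify $\mathbf{x}\in\{0,1\}^n$ with the indicator vector of $S\subseteq V$, note that the constraint $(A\mathbf{x})_i+x_i\geq 1$ says that $i\in S$ or $i$ has a neighbor in $S$, and note that the coordinate sum equals $|S|$. Your remarks about the zero diagonal of $A$ and the integrality of $|S\cap N[i]|$ spell out details the paper leaves implicit.
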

\begin{proof}
    This follows immediately by treating $\mathbf{x}\in\{0,1\}^n$ as an indicator vector of a set $S \subset V$ with $x_i=1$ iff $i \in S$. The conditions of $P(G)$ are equivalent to $S$ being a dominating set: since $x_{i}+\sum_{j\in V\colon\{i,j\}\in E}x_{j}\geq1$ we have that either $i \in S$ or else there must exist at least one $j$ with $x_j=1$ and $\{i,j\} \in E$, so $j\in S$ is adjacent to $i$. The last restriction on the sum implies that $|S|=k$. 
\end{proof}

A not-so-close approximation to the number of integer points in a polytope is given by the volume. Set
\begin{equation}\label{dtildef}
\begin{aligned}
D_{k}(G)&\colonequals \{x\in P(G)\colon  \sum_{v\in V}x_{v}=k\}\\
\widetilde{d}_{k}(G)
&\colonequals\mathrm{Vol}_{n-1}(D_{k}(G))
\end{aligned}
\end{equation}
for any real number $k\in[0,n]$, where $\mathrm{Vol}_{n-1}$ denotes $(n-1)$-dimensional Lebesgue measure.  

\begin{theorem}\label{thm:polytope}
    The numbers $\widetilde{d}_k(G)$ form a log-concave sequence, i.e. 
    \[\widetilde{d}_k(G)^2 \geq \widetilde{d}_{k-1}(G) \widetilde{d}_{k+1}(G)
    \]
    for $k=1,\ldots,n-1$ and every graph $G$.
\end{theorem}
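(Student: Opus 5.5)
The plan is to apply the Brunn--Minkowski inequality in dimension $n-1$ to the slices of the convex polytope $P(G)$ by the parallel hyperplanes $H_t=\{\sum_v x_v=t\}$. This is the classical argument that sections of a convex body have $1/(n-1)$-concave, and hence log-concave, volume.

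First I note that $P(G)$ is convex, being an intersection of the cube $[0,1]^n$ with finitely many half-spaces $x_i+\sum_{j\sim i}x_j\ge 1$. Hence for $s,t\in[0,n]$ and $\lambda\in[0,1]$ we have
\[
\lambda D_s(G)+(1-\lambda)D_t(G)\subseteq D_{\lambda s+(1-\lambda)t}(G).
\]
Indeed, a convex combination of points of $P(G)$ lies in $P(G)$, and the coordinate sum is affine, so it equals $\lambda s+(1-\lambda)t$. In our case we take $s=k-1$, $t=k+1$, $\lambda=1/2$, which gives $\tfrac12 D_{k-1}+\tfrac12 D_{k+1}\subseteq D_k$.

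To apply Brunn--Minkowski, I would identify each hyperplane $H_t$ with $\R^{n-1}$ via a fixed translation, say $x\mapsto x-\tfrac{t}{n}\mathbf 1$, which maps $H_t$ onto the linear subspace $H_0$. Translation preserves $(n-1)$-dimensional Lebesgue measure, and Minkowski combinations commute with these translations: the shift for level $\lambda s+(1-\lambda)t$ is exactly the corresponding convex combination of the shifts. So the inclusion above becomes an inclusion of compact convex subsets of $H_0\cong\R^{n-1}$. Brunn--Minkowski in $\R^{n-1}$ then gives
\[
\widetilde d_k^{1/(n-1)}\ \ge\ \tfrac12\widetilde d_{k-1}^{1/(n-1)}+\tfrac12\widetilde d_{k+1}^{1/(n-1)}\ \ge\ \bigl(\widetilde d_{k-1}\widetilde d_{k+1}\bigr)^{1/(2(n-1))},
\]
where the last step is AM--GM. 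Raising both sides to the power $2(n-1)$ yields $\widetilde d_k^2\ge \widetilde d_{k-1}\widetilde d_{k+1}$.

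The main obstacle is the degenerate cases, since Brunn--Minkowski in its multiplicative form needs nonempty sets.
\begin{itemize}
\item If either $D_{k-1}$ or $D_{k+1}$ is empty or has zero $(n-1)$-volume, the right-hand side is $0$ and the inequality is trivial.
\item Otherwise both sets are nonempty compact convex sets, so the standard Brunn--Minkowski inequality for nonempty compact sets applies directly.
\item The case $n=1$ has no index $k$ in the stated range, so it is vacuous.
\end{itemize}
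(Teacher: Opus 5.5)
Your proposal is correct and follows the paper's argument: you use the convexity inclusion $\tfrac12\bigl(D_{k-1}(G)+D_{k+1}(G)\bigr)\subseteq D_k(G)$ after translating along $(1,\ldots,1)$, then apply Brunn--Minkowski in $\R^{n-1}$. The differences are cosmetic: you derive the multiplicative bound from the additive form via AM--GM instead of quoting it, and you handle the empty and degenerate cases explicitly, which the paper leaves implicit.
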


\begin{proof} 
The log-concavity of these numbers follows directly from the Brunn-Minkowski inequality.  For any real number $k$ in $[1,n-1]$, we have
\[
D_{k}(G)\supseteq \frac{1}{2}\Big(D_{k+1}(G)+ D_{k-1}(G)\Big),
\]
where $+$ denotes the Minkowski sum, and we consider $D_{k-1}(G),D_{k}(G),D_{k+1}(G)$ all as subsets of $\R^{n-1}$ by translating in the direction $(1,\ldots,1)$ as needed. 
So, using this containment, \eqref{dtildef} and the multiplicative form of Brunn-Minkowski that $\mu(\lambda A + (1-\lambda)B) \geq \mu(A)^{\lambda}\mu(B)^{1-\lambda}$ for a Lebesgue measure $\mu$, for any $0<\lambda<1$, we have (using $\lambda=1/2$)
\begin{flalign*}
\widetilde{d}_{k}(G)^{2}
&\geq \left( \mathrm{Vol}_{n-1}\Big(\frac{1}{2}(D_{k+1}(G)+  D_{k-1}(G))\Big) \right)^{2}\\
&\geq\mathrm{Vol}_{n-1}D_{k+1}(G)\cdot\mathrm{Vol}_{n-1}D_{k-1}(G)
\stackrel{\eqref{dtildef}}{=}\widetilde{d}_{k-1}(G)\widetilde{d}_{k+1}(G).
\end{flalign*}
That is, the sequence $\widetilde{d}_{0}(G),\ldots,\widetilde{d}_{n}(G)$ is log-concave.
\end{proof}

It is tempting to deduce a similar property for $d_{0}(G),\ldots,d_{n}(G)$ from this log-concavity property using e.g. the main result of \cite{harsha10}, but it would be wrong.

\subsection{Log-concave families of trees}

Paths and caterpillars are a fundamental class of trees. They were seriously studied in the context of the independence set problem, see e.g.~\cites{BES2018, galvin2018, WangZhu2011,Zhu2007}. 

A caterpillar tree $T(a_1,\ldots,a_k)$ consists of a spine of vertices $v_1,\ldots,v_k$, with edges $v_i - v_{i+1}$, and to vertex $v_i$ there are $a_i$ many leaves attached. 

\begin{theorem}\label{thm:cat}
    Let $T\colonequals T(a_1,\ldots,a_k)$ be a caterpillar tree with $a_i \geq 1$ for all $i$. Then the domination  polynomial is equal to 
    \[
    D(T;x) \colonequals \sum_{j\geq0} d_j(T)x^j=  \prod_{i=1}^k (x^{a_i}+x(1+x)^{a_i})
    \]
    and its coefficients form a log-concave sequence, i.e. $d_j(T)^2 \geq d_{j-1}(T)d_{j+1}(T)$, $\forall$ $j\geq1$.
\end{theorem}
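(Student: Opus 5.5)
The plan has two parts: first derive the product formula by a direct combinatorial decomposition, then deduce log-concavity from log-concavity of each factor.

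\emph{Product formula.} Let $L_i$ be the set of $a_i$ leaves attached to the spine vertex $v_i$, and call $B_i=\{v_i\}\cup L_i$ the $i$-th block, so that $V$ is the disjoint union of $B_1,\ldots,B_k$. I would show that a set $S\subseteq V$ is dominating if and only if, for every $i$, either $v_i\in S$, or $v_i\notin S$ and $L_i\subseteq S$.
\begin{itemize}
\item A leaf in $L_i$ has only $v_i$ as a neighbour. So if $v_i\notin S$, every leaf of $L_i$ must itself lie in $S$; this gives necessity.
\item Conversely, suppose each block satisfies the condition. If $v_i\in S$, then $v_i$ and all of $L_i$ are dominated. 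If $v_i\notin S$ and $L_i\subseteq S$, then the leaves are dominated, and $v_i$ is dominated by a leaf because $a_i\ge 1$. This is the only place the hypothesis $a_i\geq1$ is used.
\end{itemize}
Since the condition is local to each block, dominating sets factor as products of admissible choices per block. In block $i$, the choice $v_i\in S$ with an arbitrary subset of $L_i$ contributes $x(1+x)^{a_i}$, and the choice $S\cap B_i=L_i$ contributes $x^{a_i}$. Multiplying over $i$ gives $D(T;x)=\prod_{i=1}^k (x^{a_i}+x(1+x)^{a_i})$.

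\emph{Log-concavity.} I would invoke the classical fact (Hoggar/Ibragimov) that the convolution of log-concave sequences of nonnegative numbers with no internal zeros is again log-concave with no internal zeros. It therefore suffices to check each factor. Write a factor as $x\,f_a(x)$ with
\[
f_a(x)=(1+x)^a+x^{a-1}.
\]
Its coefficient sequence is
\[
c_j=\binom{a}{j}\quad (j\neq a-1),\qquad c_{a-1}=a+1,\qquad c_a=1 .
\]
All $c_j$ are positive for $0\le j\le a$, so there are no internal zeros. For $a=1$ the sequence is $2,1$, which is trivially log-concave. For $a\geq2$, binomial coefficients are log-concave, and raising $c_{a-1}$ only affects the inequalities centred at $j=a-2$ and $j=a-1$:
\begin{itemize}
\item At $j=a-1$: $(a+1)^2\geq \binom{a}{2}\cdot 1$ holds clearly.
\item At $j=a$: we need $c_a^2=1\ge c_{a-1}c_{a+1}=0$, which is trivial.
\item At $j=a-2$ (for $a\ge3$): we need $\binom{a}{2}^2\geq \binom{a}{3}(a+1)$. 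This reduces to $a(a-1)/4\geq (a-2)(a+1)/6$, i.e. $3a^2-3a\geq 2a^2-2a-4$, which is always true.
\item For $a=2$, the sequence is $1,3,1$, which is log-concave.
\end{itemize}

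The main obstacle is only this bookkeeping around the perturbed coefficient $c_{a-1}$, together with citing the convolution-preservation theorem correctly, including its no-internal-zeros hypothesis. The combinatorial factorization itself is routine.
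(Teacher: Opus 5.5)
Your proposal is correct and follows essentially the same proof as the paper. Both use the block-by-block factorization, which relies on $a_i\ge 1$ so that $v_i$ is dominated by its leaves. Both then check log-concavity of each factor $x(1+x)^{a_i}+x^{a_i}$, where the only nontrivial inequality is $\binom{a}{2}^2\ge\binom{a}{3}(a+1)$, and conclude because products of log-concave polynomials are log-concave. Your version is slightly more careful than the paper's: you state the no-internal-zeros hypothesis that the convolution theorem needs and treat the small cases $a=1,2$ explicitly, both of which the paper leaves implicit.
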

\begin{proof}
    Let $S$ be a dominating set of $T$. If $v_i \in S$, then each of the leaves attached to $v_i$ may or may not be in $S$ independently of the rest of the graph. This contributes $x(1+x)^{a_i}$ to the domination polynomial. If $v_i \not \in S$, then each of the leaves attached to $v_i$ has to be in $S$, otherwise it would not be ``covered'' by a vertex from $S$. This gives a term $x^{a_i}$ in the domination polynomial. In that case the vertex $v_i$ is ``covered'' by its leaves (here we use $a_i\geq 1$) and the local configuration does not depend on which vertices from the rest of the graph are in $S$. Thus the choice of which vertices among $v_i$ and its attached leaves lie in $S$ is independent for each $i$, so 
    \[
    D(T;x) = \prod_{i=1}^{k} \left( x(1+x)^{a_i} + x^{a_i} \right).
    \]
    Notice that 
    \[
    x(1+x)^{a_i}+x^{a_i} = x(1 + a_ix +\binom{a_{i}}{2}x^{2}+\cdots+ (a_i+1)x^{a_i-1} + x^{a_i})
    \]
    is a log-concave polynomial since most of its coefficients are binomials which satisfy
    \[
    \binom{a_i}{j}^2 \geq \binom{a_i}{j-1}\binom{a_i}{j+1}
    \]
    thereby verifying log-concavity for the coefficients where $j \neq a_i-2$ and 
    \[\binom{a_i}{2}^2 \geq \binom{a_i}{3}(a_i+1).
    \]
    Finally, the product of log-concave polynomials is log-concave and the claim follows. 
\end{proof}

\begin{remark}
    We believe that also legless (when some $a_i=0$) caterpillars have log-concave domination polynomials. This result shows a different story to the independence polynomial log-concavity investigations. Only regular (all $a_i$ being equal) or periodic regular ($a_{2i}=a,a_{2i+1}=b$) caterpillars are known to have unimodal independence polynomials~\cites{galvin2018,BES2018}, whereas the general case remains open. 
\end{remark}

\section{Trees with Arbitrary Breaks}

In this section we prove that an arbitrary number of log-concavity failures can occur for a family built from the Bautista--Ramos trees \cite{bautista25} by adding a leaf to each existing leaf of the trees, as in \cite{beaton24}.

We work with the following rooted gadgets (see Figure \ref{fig:Wmt}.)
\begin{itemize}
    \item Let $L$ be the rooted path on three vertices $(u,v,w)$, rooted at $u$.
    \item Let $F_t$ be the rooted tree obtained by taking a root $c$ with $t$ children, each a copy of $L$.
    \item Let $H_t$ be the rooted tree obtained by taking a root $v$ with three children, each a copy of $F_t$.
    \item Let $X$ be the rooted path on two vertices $(a,b)$, rooted at $a$.
    \item Let $W_{m,t}$ be the rooted tree obtained by taking a new root $r$, attaching $m$ children each a copy of $H_t$, and attaching one additional child which is a copy of $X$.
\end{itemize}

This is exactly the tree obtained from the Bautista--Ramos tree $TG_{m,t}$ \cite{bautista25} by attaching one new leaf to every leaf.

\begin{figure}[ht]
\centering
\includegraphics[width=\textwidth]{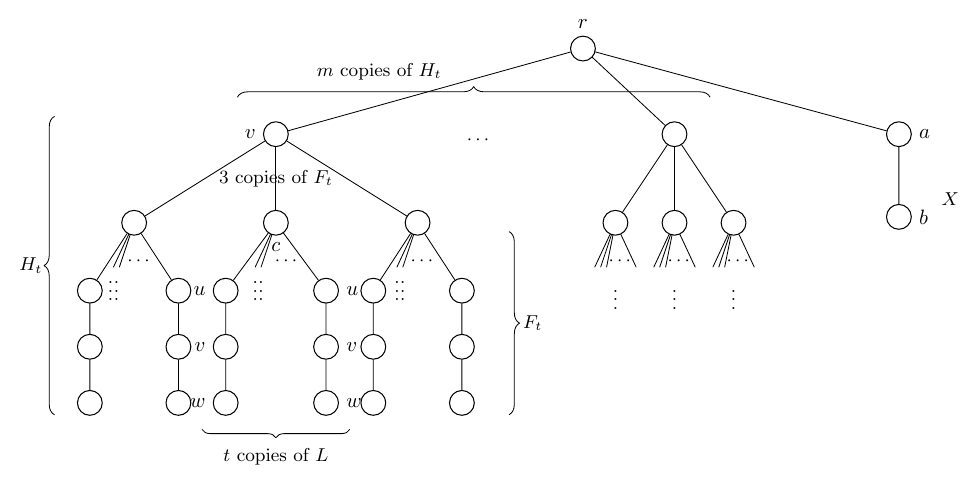}
\caption{The tree $W_{m,t}$. The top root $r$ is adjacent to $m$ copies of $H_t$ and to one additional copy of $X$. In the leftmost branch, one copy of $H_t$ is expanded; inside it, the middle copy of $F_t$ is expanded; and inside that copy of $F_t$, the rooted path $L$ is repeated $t$ times.}
\label{fig:Wmt}
\end{figure}

\subsection{Rooted domination states}\label{seccomp}
Let $S$ be a rooted tree with root $\rho$ which we view as a subtree in a larger structure.  Let $x$ be an indeterminate.  The dominating set polynomial can be split recursively into the following three parts.
\begin{itemize}
    \item $\A_S(x)$: generating polynomial for dominating sets of $S$ in which $\rho$ is chosen.
    \item $\B_S(x)$: generating polynomial for dominating sets of $S$ in which $\rho$ is not chosen and is not dominated by its parent (so it is dominated by one of its children).
    \item $\C_S(x)$: generating polynomial for dominating sets of $S$ in which $\rho$ is not chosen and is not dominated by any child (so it is dominated by its parent).
\end{itemize}
In every case, all other vertices of $S$ must already be dominated inside $S$.

If the root of $S$ has child-subtrees $S_1,\dots,S_d$, then the standard recurrences are
\begin{align}
\A_S(x) &= x\prod_{i=1}^d \bigl(\A_{S_i}(x)+\B_{S_i}(x)+\C_{S_i}(x)\bigr), \label{eq:Arec}\\
\B_S(x) &= \prod_{i=1}^d \bigl(\A_{S_i}(x)+\B_{S_i}(x)\bigr)-\prod_{i=1}^d \B_{S_i}(x), \label{eq:Brec}\\
\C_S(x) &= \prod_{i=1}^d \B_{S_i}(x). \label{eq:Crec}
\end{align}
Indeed, if the root is chosen then it dominates every child-root, so each child-root may be in any of the three states; if the root is in state $\C$, then every child-root must be in state $\B$; and if the root is in state $\B$, then every child-root must lie in state $\A$ or $\B$, with at least one child in state $\A$.

\begin{remark}
The recursions \eqref{eq:Arec}, \eqref{eq:Brec} and \eqref{eq:Crec} imply a dynamic programming algorithm to efficiently compute the dominating set sequence of a tree.  Given a tree, choose a root, then recursively compute $\A,\B$ and $\C$ for each rooted subtree, using depth-first search.
\end{remark}
\subsection{The basic branch and the t-fan gadget}

\begin{lemma}\label{lem:L}
Let $L$ be a path graph with vertices $(u,v,w)$ rooted at $u$, considered as a subgraph of a larger tree, i.e $u$ is connected to a vertex not equal to $v$ in the larger tree.  Then
\[
\A_L(x)=x^2(2+x),\qquad \B_L(x)=x(1+x),\qquad \C_L(x)=x.
\]
\end{lemma}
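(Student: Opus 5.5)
The plan is to apply the recurrences \eqref{eq:Arec}, \eqref{eq:Brec}, \eqref{eq:Crec} twice: first to the leaf $w$ viewed as a one-vertex rooted tree, then to $v$ with its single child $w$, and finally to $u$ with its single child $v$. This bottom-up pass is exactly the dynamic programming scheme of the preceding remark. The only subtlety is the base case, and I would treat it directly from the definitions of $\A,\B,\C$ rather than through the product formulas.

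\textbf{Base case.} For a single vertex $w$ with no children, we have $\A_w=x$. In state $\B$ the root must be dominated by a child, which is impossible, so $\B_w=0$. This agrees with \eqref{eq:Brec}, where both empty products equal $1$ and cancel. In state $\C$ the root is unchosen and dominated only by its parent, and there is nothing else to dominate, so $\C_w=1$. This agrees with \eqref{eq:Crec} as an empty product.

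\textbf{Vertex $v$.} Applying the recurrences with the single child $w$ gives
\[
\A_v = x(\A_w+\B_w+\C_w) = x(x+1),\qquad
\B_v = (\A_w+\B_w)-\B_w = x,\qquad
\C_v = \B_w = 0 .
\]
As a sanity check, $\C_v=0$ is correct: if $v$ is unchosen and its child $w$ does not dominate it, then $w$ is unchosen and has no neighbor in the set other than the absent $v$, so $w$ is undominated.

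\textbf{Vertex $u$.} Applying the recurrences with the single child $v$ gives
\[
\A_u = x(x^2+x+x+0)=x^2(x+2),\qquad
\B_u = (x^2+x+x)-x = x(1+x),\qquad
\C_u = \B_v = x .
\]
These match the claimed formulas. I would also verify them by direct enumeration. For $\A$, we need $u$ chosen and $w$ dominated, so the valid sets are $\{u,v\}$, $\{u,w\}$ and $\{u,v,w\}$, giving $2x^2+x^3$. For $\B$, we need $u$ unchosen and $v$ chosen, giving $\{v\}$ and $\{v,w\}$. For $\C$, we need $u$ and $v$ unchosen, so $w$ must be chosen, but then nothing dominates $v$. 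This looks like a contradiction with $\C_L=x$, but it is not: the set $\{v\}$ itself lies in $\C$, since $v$ dominates $u$.

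The main (mild) obstacle is therefore interpretive. State $\C$ must be read as "$\rho$ is not chosen and is not required to be dominated from within $S$," so it includes configurations where a child happens to dominate $\rho$. In that reading the state-$\B$ set $\{v\}$ is also counted in $\C$. The recurrences still count correctly because, at the parent, $\C$ is combined only with a chosen parent through \eqref{eq:Arec}, where $\A,\B,\C$ are summed, and $\B$ subtracts its overlap through $\prod\B$. So I would state this convention explicitly and note that the derived values $\A_L=x^2(2+x)$, $\B_L=x(1+x)$, $\C_L=x$ are consistent with it, where $\C_L$ counts the sets $S'\subseteq\{u,v,w\}$ with $u\notin S'$ that dominate $v$ and $w$, namely just $\{v\}$.
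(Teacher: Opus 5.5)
Your bottom-up pass through \eqref{eq:Arec}--\eqref{eq:Crec} is correct and by itself proves the lemma. The base case $\A_w=x$, $\B_w=0$, $\C_w=1$ is read off correctly from the definitions. The paper instead enumerates the configurations of the three-vertex path directly, which is the same count done by hand. Your intermediate values at $v$ are just Lemma~\ref{lem:X}, since the subtree rooted at $v$ is a copy of $X$.

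The trouble is your final paragraph, which is wrong and should be deleted. In the enumeration for state $\C$ you say that once $u,v\notin S'$ and $w\in S'$, ``nothing dominates $v$.'' But $v$ and $w$ are adjacent, so $\{w\}$ dominates both $v$ and $w$. It is the unique state-$\C$ configuration, and it is exactly what your recurrence computed: $\C_u=\B_v$, and $\B_v=x$ counts precisely $\{w\}$. There is no interpretive obstacle.

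The ``convention'' you propose would break the proof rather than rescue it. The set $\{v\}$ is not in state $\C$: the root $u$ is dominated by its child $v$, so it is in state $\B$. You already counted it there, as the $x$ in $\B_L=x+x^2$.

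Under your relaxed reading (root unchosen, only $v,w$ required to be dominated), $\C_L$ would count $\{v\}$, $\{w\}$ and $\{v,w\}$, not ``just $\{v\}$.'' That gives $2x+x^2\neq x$. The recurrences you used would also fail. \eqref{eq:Arec} would count every state-$\B$ child configuration twice, and \eqref{eq:Crec} would have to become $\prod_i(\A_{S_i}+\B_{S_i})$. Your own sanity check $\C_v=0$ already uses the strict reading; the relaxed one would give $\C_v=x$ from $\{w\}$.

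Keep the paper's definitions, under which the three states are mutually exclusive: $u$ chosen; $u$ unchosen with $v$ chosen; $u$ and $v$ unchosen. The direct check then agrees with your recurrence output: $\{u,v\},\{u,w\},\{u,v,w\}$ for $\A$, $\{v\},\{v,w\}$ for $\B$, and $\{w\}$ for $\C$.
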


\begin{proof}
Direct enumeration gives the answer:
In state $\A$, the root $u$ is chosen. To dominate $w$, we must also choose either $v$ or $w$. Hence
    $\A_L = 2x^2+x^3 = x^2(2+x)$. In state $\B$, the root $u$ is not chosen but must be dominated from below, so $v$ must be chosen; then $w$ may or may not be chosen. Hence
    $\B_L = x+x^2 = x(1+x)$. In state $\C$, the root $u$ is left to be dominated by the parent, so $v$ cannot be chosen. Then $w$ must be chosen in order to dominate $v$. Hence $    \C_L = x.$
\end{proof}

Now let $F_t$ be the rooted tree whose root $c$ has $t$ children, each a copy of $L$. Define
\begin{equation}\label{eq:PQR}
P_t\colonequals(1+3x+x^2)^t,
\qquad
Q_t\colonequals(2+3x+x^2)^t,
\qquad
R_t\colonequals\frac{P_t-(1+x)^t}{x}.
\end{equation}

\begin{lemma}\label{lem:Ft}
For the rooted tree $F_t$,
\[
\A_{F_t}(x)=x^{t+1}Q_t,
\qquad
\B_{F_t}(x)=x^{t+1}R_t,
\qquad
\C_{F_t}(x)=x^t(1+x)^t.
\]
\end{lemma}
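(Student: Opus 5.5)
The plan is to substitute Lemma~\ref{lem:L} into the recurrences \eqref{eq:Arec}, \eqref{eq:Brec} and \eqref{eq:Crec}, with $d=t$ identical children $S_1=\cdots=S_t=L$. Each product then collapses to a $t$-th power, so only three sums of the polynomials from Lemma~\ref{lem:L} need to be computed and factored.

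For $\A_{F_t}$, I will compute
\[
\A_L+\B_L+\C_L = x^2(2+x)+x(1+x)+x = x(2+3x+x^2).
\]
Raising this to the $t$-th power and multiplying by $x$ gives $\A_{F_t}=x\cdot x^t(2+3x+x^2)^t=x^{t+1}Q_t$.

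For $\C_{F_t}$, \eqref{eq:Crec} gives $\C_{F_t}=\B_L^t=x^t(1+x)^t$ directly.

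For $\B_{F_t}$, I will first compute
\[
\A_L+\B_L=x^2(2+x)+x(1+x)=x(1+3x+x^2).
\]
Then \eqref{eq:Brec} yields
\[
\B_{F_t}=x^t(1+3x+x^2)^t-x^t(1+x)^t=x^t\bigl(P_t-(1+x)^t\bigr).
\]
Since $P_t$ and $(1+x)^t$ both have constant term $1$, their difference is divisible by $x$. Hence $\B_{F_t}=x^{t+1}R_t$ with $R_t$ a genuine polynomial, as defined in \eqref{eq:PQR}.

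I also need to check that Lemma~\ref{lem:L} applies in this setting. Each copy of $L$ hangs from $c$ via its root $u$, so $u$ is joined to a vertex other than $v$, which is exactly the hypothesis of the lemma. The roles of $\B$ and $\C$ for the child roots then match the recurrence conventions.

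There is no real obstacle here. The only points needing care are this bookkeeping about which domination state each child is allowed to be in, and the divisibility by $x$ that makes $R_t$ a polynomial.
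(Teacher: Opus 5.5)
Your proposal is correct and takes essentially the same approach as the paper: substitute Lemma~\ref{lem:L} into \eqref{eq:Arec}--\eqref{eq:Crec} with $t$ identical children, compute $\A_L+\B_L+\C_L=x(2+3x+x^2)$ and $\A_L+\B_L=x(1+3x+x^2)$, and take $t$-th powers. Your extra checks are correct additions the paper leaves implicit: that Lemma~\ref{lem:L}'s hypothesis holds for each child, and that $P_t-(1+x)^t$ is divisible by $x$.
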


\begin{proof}
By Lemma~\ref{lem:L},
\[
\A_L+\B_L+\C_L
= x^2(2+x)+x(1+x)+x
= x(2+3x+x^2),
\]
so by \eqref{eq:Arec},
\[
\A_{F_t} 
= x\bigl(x(2+3x+x^2)\bigr)^t 
\stackrel{\eqref{eq:PQR}}{=} x^{t+1}Q_t.
\]
Also,
\[
\A_L+\B_L 
= x^2(2+x)+x(1+x) 
= x(1+3x+x^2),
\]
so by \eqref{eq:Brec} and \eqref{eq:Crec},
\[
\B_{F_t} 
= \bigl(x(1+3x+x^2)\bigr)^t - \bigl(x(1+x)\bigr)^t
= x^t\bigl((1+3x+x^2)^t-(1+x)^t\bigr)
\stackrel{\eqref{eq:PQR}}{=} x^{t+1}R_t,
\]
while by \eqref{eq:Crec}
\[
\C_{F_t}=\bigl(x(1+x)\bigr)^t=x^t(1+x)^t.
\]
\end{proof}

\subsection{The three-branch gadget}
Let $H_t$ be the rooted tree whose root has three children, each a copy of $F_t$.

\begin{lemma}\label{lem:Ht}
For $H_t$,
\[
\A_{H_t}(x)=x^{3t+1}U_t,
\qquad
\B_{H_t}(x)=x^{3t+3}V_t,
\qquad
\C_{H_t}(x)=x^{3t+3}R_t^3,
\]
where
\begin{equation}\label{uvdef}
U_t(x)\colonequals(P_t+xQ_t)^3,
\qquad
V_t(x)\colonequals(Q_t+R_t)^3-R_t^3.
\end{equation}
\end{lemma}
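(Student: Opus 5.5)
We apply the recurrences \eqref{eq:Arec}, \eqref{eq:Brec}, \eqref{eq:Crec} to the root of $H_t$, whose three children are copies of $F_t$, and substitute the values from Lemma~\ref{lem:Ft}. The one nontrivial step is simplifying $\A_{F_t}+\B_{F_t}+\C_{F_t}$. Everything else is factoring out powers of $x$.

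For $\A_{H_t}$ we first compute
\[
\A_{F_t}+\B_{F_t}+\C_{F_t}=x^{t+1}Q_t+x^{t+1}R_t+x^t(1+x)^t = x^t\bigl(xQ_t+xR_t+(1+x)^t\bigr).
\]
By the definition \eqref{eq:PQR} of $R_t$, we have $xR_t=P_t-(1+x)^t$. Hence the bracket collapses to $P_t+xQ_t$, and the sum equals $x^t(P_t+xQ_t)$. By \eqref{eq:Arec},
\[
\A_{H_t}=x\bigl(x^t(P_t+xQ_t)\bigr)^3=x^{3t+1}(P_t+xQ_t)^3=x^{3t+1}U_t,
\]
as claimed in \eqref{uvdef}.

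For $\B_{H_t}$ we use $\A_{F_t}+\B_{F_t}=x^{t+1}(Q_t+R_t)$. Then \eqref{eq:Brec} gives
\[
\B_{H_t}=x^{3t+3}(Q_t+R_t)^3-\bigl(x^{t+1}R_t\bigr)^3=x^{3t+3}\bigl((Q_t+R_t)^3-R_t^3\bigr)=x^{3t+3}V_t.
\]
Finally, \eqref{eq:Crec} gives $\C_{H_t}=\B_{F_t}^3=x^{3t+3}R_t^3$.

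We should also confirm that $R_t$ is a genuine polynomial, so that the factorizations are legitimate. This holds because $P_t-(1+x)^t$ vanishes at $x=0$. The only place requiring care is the cancellation via $xR_t=P_t-(1+x)^t$ in the $\A$ computation. Without it one would not see the compact form $U_t=(P_t+xQ_t)^3$, and the exponent would appear as $3t$ with an unsimplified cubic. The main obstacle is therefore purely bookkeeping: tracking the powers of $x$, which are $x^t$ from $\C_{F_t}$ versus $x^{t+1}$ from $\A_{F_t}$ and $\B_{F_t}$.
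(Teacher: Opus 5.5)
Your proof is correct and follows the paper's argument step for step: the identity $xR_t+(1+x)^t=P_t$ collapses $\A_{F_t}+\B_{F_t}+\C_{F_t}$ to $x^t(P_t+xQ_t)$, and the recurrences \eqref{eq:Arec}, \eqref{eq:Brec}, \eqref{eq:Crec} then give all three formulas directly. One small correction to your closing side remark, which does not affect the proof: even without that simplification the prefactor is $x\cdot(x^t)^3=x^{3t+1}$, not $x^{3t}$; the identity only serves to make the cubic factor compact.
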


\begin{proof}
From Lemma~\ref{lem:Ft},
\begin{equation}\label{abcft}
\A_{F_t}+\B_{F_t}+\C_{F_t}
= x^{t+1}Q_t + x^{t+1}R_t + x^t(1+x)^t
= x^t\bigl(xQ_t + xR_t + (1+x)^t\bigr).
\end{equation}
By the definition of $R_t$ in \eqref{eq:PQR}, we have
\[
xR_t+(1+x)^t
=(1+3x+x^2)^t
\stackrel{\eqref{eq:PQR}}{=}P_t,
\]
so
\[
\A_{F_t}+\B_{F_t}+\C_{F_t}
\stackrel{\eqref{abcft}}{=}x^t(P_t+xQ_t).
\]
Hence \eqref{eq:Arec} yields
\[
\A_{H_t}=x\bigl(x^t(P_t+xQ_t)\bigr)^3=x^{3t+1}(P_t+xQ_t)^3
\stackrel{\eqref{uvdef}}{=}
x^{3t+1}U_t.
\]
Likewise,
\[
\A_{F_t}+\B_{F_t}
=x^{t+1}(Q_t+R_t),
\]
so \eqref{eq:Brec} gives
\[
\B_{H_t}=\bigl(x^{t+1}(Q_t+R_t)\bigr)^3 - \bigl(x^{t+1}R_t\bigr)^3
= x^{3t+3}\bigl((Q_t+R_t)^3-R_t^3\bigr)
\stackrel{\eqref{uvdef}}{=} x^{3t+3}V_t.
\]
Finally, by \eqref{eq:Crec}
\[
\C_{H_t}=\bigl(x^{t+1}R_t\bigr)^3 = x^{3t+3}R_t^3.
\]
\end{proof}

\subsection{Coefficient growth estimates}
We now isolate the exponential growth in $t$ of the low-degree coefficients of $U_t$ and $V_t$.

\begin{lemma}\label{lem:coefbasic}
Fix $r\ge 0$.
\begin{enumerate}
    \item $\coeff{x^r}{P_t}$ and $\coeff{x^r}{R_t}$ are polynomials in $t$.
    \item There is a polynomial $q_r(t)$ with positive leading coefficient such that
    \[
    \coeff{x^r}{Q_t}=2^t q_r(t).
    \]
    In particular, $\coeff{x^r}{Q_t}=2^t t^{O(1)}$.
\end{enumerate}
As a consequence,
\[
\coeff{x^r}{V_t}=2^{3t} t^{O(1)},
\qquad
\coeff{x^r}{U_t}=2^{\min(r,3)t} t^{O(1)}.
\]
\end{lemma}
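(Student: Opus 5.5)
The plan is to expand each polynomial binomially in $t$ and read off the coefficient of $x^r$ directly. For (1), write $P_t=(1+(3x+x^2))^t=\sum_{j\ge0}\binom{t}{j}(3x+x^2)^j$. Only the terms with $j\le r$ contribute to $[x^r]$, so $\coeff{x^r}{P_t}$ is a finite sum of products of constants with $\binom{t}{j}$, $j\le r$. Hence it is a polynomial in $t$ of degree at most $r$. The same holds for $(1+x)^t$. Then
\[
\coeff{x^r}{R_t}=\coeff{x^{r+1}}{P_t}-\coeff{x^{r+1}}{(1+x)^t}
\]
is also a polynomial in $t$.

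For (2), write $Q_t=2^t(1+\tfrac{3}{2}x+\tfrac12 x^2)^t$ and expand in the same way:
\[
q_r(t)=\sum_{j\le r}\binom{t}{j}\,\coeff{x^r}{\bigl(\tfrac32x+\tfrac12x^2\bigr)^j}.
\]
All coefficients here are nonnegative, and the $j=r$ term equals $\binom{t}{r}(3/2)^r$. That term has the highest degree and a positive leading coefficient, so $q_r$ has degree exactly $r$ with positive leading coefficient. In particular $q_r(t)=t^{O(1)}$ and $q_r(t)>0$ for large $t$.

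For the consequences, I would first note that $R_t$ has nonnegative coefficients, with polynomial growth in $t$; its constant term is $3t$. For $V_t=(Q_t+R_t)^3-R_t^3$, I would expand as
\[
V_t=Q_t^3+3Q_t^2R_t+3Q_tR_t^2.
\]
Every term has nonnegative coefficients. The coefficient of $x^r$ in $Q_t^3$ is a sum of products of three coefficients of the form $2^t\,\mathrm{poly}(t)$, so it is $2^{3t}t^{O(1)}$, and it is positive with an honest lower bound of order $2^{3t}$ (take $r$ from one factor and constant terms from the others). The other two terms are at most $2^{2t}t^{O(1)}$ and are nonnegative, so they do not spoil either the upper or the lower bound. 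Hence $\coeff{x^r}{V_t}=2^{3t}t^{O(1)}$.

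For $U_t=(P_t+xQ_t)^3$, I would expand
\[
U_t=\sum_{i=0}^{3}\binom{3}{i}x^iQ_t^iP_t^{3-i}.
\]
The term with index $i$ contributes to $[x^r]$ only if $i\le r$, and it is then $2^{it}t^{O(1)}$. The dominant contribution therefore comes from $i=\min(r,3)$, and its coefficient of $x^{r-i}$ is positive. All contributions are nonnegative, so the lower bound comes from this single term and the upper bound from summing the four terms.

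The main subtle point is the meaning of $t^{O(1)}$ as a two-sided estimate, meaning the coefficient is bounded above and below by $2^{ct}$ times polynomials. The lower bound needs positivity of the relevant leading coefficients, which is why I would emphasize nonnegativity of every expansion. This is routine once positivity is tracked, and I expect no real obstacle beyond bookkeeping.
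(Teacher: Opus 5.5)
Your proposal is correct and follows the paper's proof essentially step for step: expand $P_t$, $R_t$, $Q_t=2^t(1+\tfrac32x+\tfrac12x^2)^t$ to get polynomial-in-$t$ coefficients, then expand $V_t=Q_t^3+3Q_t^2R_t+3Q_tR_t^2$ and $U_t$ by powers of $xQ_t$. Your explicit tracking of nonnegativity to justify the lower bounds, which the paper leaves implicit, is a welcome addition. One small slip: the constant term of $R_t$ is $[x^1]P_t-[x^1](1+x)^t=3t-t=2t$, not $3t$, though this affects nothing in the argument.
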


\begin{proof}
For $P_t=(1+3x+x^2)^t$, the coefficient of $x^r$ is obtained by choosing in each factor one of $1$, $3x$, or $x^2$, with total degree $r$. For fixed $r$, the number of contributing choices is finite and independent of $t$, and each contribution is a product of binomial coefficients in $t$; hence $\coeff{x^r}{P_t}$ is a polynomial in $t$.

Since
\[
R_t \stackrel{\eqref{eq:PQR}}{=} \frac{P_t-(1+x)^t}{x},
\]
we have
\[
\coeff{x^r}{R_t} = \coeff{x^{r+1}}{P_t} - \coeff{x^{r+1}}{(1+x)^t},
\]
and each term on the right is polynomial in $t$, proving part (1).

For part (2), write
\[
Q_t=(2+3x+x^2)^t 
\stackrel{\eqref{eq:PQR}}{=}
2^t\left(1+\frac32 x + \frac12 x^2\right)^t.
\]
The coefficient of $x^r$ in the second factor is again a polynomial in $t$ for fixed $r$, and its leading term is positive because the term $(\tfrac32 x)^r$ contributes positively. Thus
\[
\coeff{x^r}{Q_t}=2^t q_r(t)
\]
for a polynomial $q_r(t)$ with positive leading coefficient.

Now,
\[
V_t
\stackrel{\eqref{uvdef}}{=}
(Q_t+R_t)^3-R_t^3 
= Q_t^3 + 3R_tQ_t^2 + 3R_t^2Q_t.
\]
By part (2), every fixed low-degree coefficient of $Q_t$ is $2^t t^{O(1)}$, while by part (1) every fixed low-degree coefficient of $R_t$ is $t^{O(1)}$. Hence the coefficient of $x^r$ in $Q_t^3$ has size $2^{3t}t^{O(1)}$, and the coefficients in the remaining two summands are at most $2^{2t}t^{O(1)}$ and $2^t t^{O(1)}$. Therefore
\[
\coeff{x^r}{V_t}=2^{3t}t^{O(1)}.
\]

For $U_t\stackrel{\eqref{uvdef}}{=}(P_t+xQ_t)^3$, expand
\[
U_t = P_t^3 + 3P_t^2(xQ_t) + 3P_t(xQ_t)^2 + (xQ_t)^3.
\]
Each factor $xQ_t$ contributes one unit of degree and one factor $2^t$ to the exponential order. Therefore, in degree $r$, at most $\min(r,3)$ such factors can appear, and the largest exponential order is $2^{\min(r,3)t}$. This proves
\[
\coeff{x^r}{U_t}=2^{\min(r,3)t}t^{O(1)}.
\]
\end{proof}

The next lemma controls low-degree coefficients of powers of $U_t$.

\begin{lemma}\label{lem:Upowers}
Fix integers $q\ge 0$ and $0\le u\le 2q$. Then
\[
\coeff{x^u}{U_t^q}=2^{ut}t^{O(1)}.
\]
\end{lemma}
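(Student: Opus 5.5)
The plan is to expand $U_t^q$ directly in terms of $P_t$ and $Q_t$ instead of iterating Lemma~\ref{lem:coefbasic}. By \eqref{uvdef}, $U_t^q=(P_t+xQ_t)^{3q}$. By the binomial theorem,
\[
U_t^q=\sum_{s=0}^{3q}\binom{3q}{s}x^sQ_t^sP_t^{3q-s},
\qquad\text{so}\qquad
\coeff{x^u}{U_t^q}=\sum_{s=0}^{u}\binom{3q}{s}\coeff{x^{u-s}}{\bigl(Q_t^sP_t^{3q-s}\bigr)}.
\]
The terms with $s>u$ have no $x^u$ coefficient, so they drop out. Since $q$ and $u$ are fixed, this is a sum of a bounded number of terms, independent of $t$. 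Every polynomial involved has nonnegative coefficients, so every summand is nonnegative. This lets me get the lower bound from a single term and the upper bound from a termwise estimate.

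For the individual factors, I note that $Q_t^s=(2+3x+x^2)^{st}=Q_{st}$ and $P_t^{3q-s}=P_{(3q-s)t}$. Applying Lemma~\ref{lem:coefbasic} with $t$ replaced by $st$ and $(3q-s)t$ gives the following, for each fixed $j$:
\[
\coeff{x^j}{Q_t^s}=2^{st}q_j(st)=2^{st}t^{O(1)},
\]
and $\coeff{x^j}{P_t^{3q-s}}$ is a polynomial in $t$. (Alternatively, one can multiply out the finitely many contributing coefficient products directly.) A coefficient of a product is a finite sum of products of such coefficients, so
\[
\coeff{x^{u-s}}{\bigl(Q_t^sP_t^{3q-s}\bigr)}=2^{st}t^{O(1)}\le 2^{ut}t^{O(1)}\qquad\text{for } s\le u.
\]
Summing finitely many such terms gives the upper bound $\coeff{x^u}{U_t^q}\le 2^{ut}t^{O(1)}$.

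For the matching lower bound, I isolate the term $s=u$. This term is admissible because $u\le 2q\le 3q$. It equals
\[
\binom{3q}{u}\coeff{x^0}{Q_t^u}\,\coeff{x^0}{P_t^{3q-u}}=\binom{3q}{u}2^{ut}\cdot 1,
\]
since the constant terms of $Q_t$ and $P_t$ are $2^t$ and $1$. All other summands are nonnegative, so $\coeff{x^u}{U_t^q}\ge\binom{3q}{u}2^{ut}$. Together with the upper bound, this gives $\coeff{x^u}{U_t^q}=2^{ut}t^{O(1)}$, reading $t^{O(1)}$ as a factor bounded above and below by polynomial factors in $t$, consistent with its use in Lemma~\ref{lem:coefbasic}.

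The main point needing care is the lower bound: one must confirm that no cancellation can occur. Nonnegativity of all coefficients of $P_t$, $Q_t$ and $x$ settles this. The remaining point is the mild one that the implied polynomial factors depend on $q$ and $u$ but not on $t$, which holds because the number of summands and contributing index choices is bounded in terms of $q$ and $u$ alone.
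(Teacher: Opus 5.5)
Your proof is correct, and it takes a different route from the paper's.

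The paper convolves over compositions $u_1+\cdots+u_q=u$, using the per-factor estimate $[x^{r}]U_t=2^{\min(r,3)t}t^{O(1)}$ from Lemma~\ref{lem:coefbasic}. It gets the upper bound from $\sum_i\min(u_i,3)\le u$. It gets the lower bound from a composition with all parts in $\{0,1,2\}$, which is where the hypothesis $u\le 2q$ enters.

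You instead flatten $U_t^q=(P_t+xQ_t)^{3q}$ and expand once with the binomial theorem. The identifications $Q_t^s=Q_{st}$ and $P_t^{3q-s}=P_{(3q-s)t}$ then mean only parts (1) and (2) of Lemma~\ref{lem:coefbasic} are needed, and you never use the intermediate coefficient estimate for $U_t$ itself. Your lower bound comes from the single exactly computable term $s=u$, equal to $\binom{3q}{u}2^{ut}$.

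This route gives you slightly more than the statement:
\begin{itemize}
\item a sharper asymptotic, $[x^u]U_t^q=\binom{3q}{u}2^{ut}+O\bigl(2^{(u-1)t}t^{O(1)}\bigr)$;
\item validity for the whole range $0\le u\le 3q$.
\end{itemize}
So the restriction $u\le 2q$ is only what the later application in Lemma~\ref{lem:er-growth} requires (there $q=m-j$ and $u=r-2j\le 2(m-j)$). The paper's version is more modular, since it reuses the already-stated bound for the coefficients of $U_t$. It is not otherwise simpler.
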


\begin{proof}
For the upper bound, use convolution together with Lemma~\ref{lem:coefbasic}. Writing
\begin{equation}\label{usum}
\coeff{x^u}{U_t^q} = \sum_{
\substack{
u_{1},\ldots,u_{q}\geq0\colon\\
u_1+\cdots+u_q=u}} \prod_{i=1}^q \coeff{x^{u_i}}{U_t},
\end{equation}
each factor satisfies
\[
\coeff{x^{u_i}}{U_t}=2^{\min(u_i,3)t}t^{O(1)}.
\]
Since $\sum_{i=1}^{q}u_{i}=u\le 2q$, every $u_i$ appearing in a nonnegative composition of $u$ satisfies $\sum_{i=1}^{q} \min(u_i,3)\le u$, and therefore each summand in \eqref{usum} is at most $2^{ut}t^{O(1)}$. Summing over finitely many compositions gives
\[
\coeff{x^u}{U_t^q}\le 2^{ut}t^{O(1)}.
\]

For the lower bound, write
\[
u = \varepsilon_1+\cdots+\varepsilon_q
\]
with each $\varepsilon_i\in\{0,1,2\}$; this is possible because $u\le 2q$. The corresponding product term in the convolution contributes
\[
\prod_{i=1}^q \coeff{x^{\varepsilon_i}}{U_t}.
\]
By Lemma~\ref{lem:coefbasic}, for $\varepsilon_i\in\{0,1,2\}$ we have
\[
\coeff{x^{\varepsilon_i}}{U_t}=2^{\varepsilon_i t}t^{O(1)},
\]
with positive leading constant. Hence this product is
\[
2^{(\varepsilon_1+\cdots+\varepsilon_q)t}t^{O(1)}=2^{ut}t^{O(1)}.
\]
Since all coefficients are nonnegative, this yields the matching lower bound. Therefore
\[
\coeff{x^u}{U_t^q}=2^{ut}t^{O(1)}.
\]
\end{proof}

\subsection{The whole tree}
We now incorporate the extra whiskered leaf at the top.

\begin{lemma}\label{lem:X}
Let $X$ be a path graph with vertices $(a,b)$ rooted at $a$.  Then
\[
\A_X(x)=x(1+x),\qquad \B_X(x)=x,\qquad \C_X(x)=0.
\]
\end{lemma}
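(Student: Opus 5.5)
This is a direct enumeration over the four subsets of $\{a,b\}$, in the same way as Lemma~\ref{lem:L}. I regard $X$ as a pendant path attached to its parent through $a$, so the leaf $b$ has $a$ as its only neighbor. I will read off each of $\A_X$, $\B_X$, $\C_X$ from the definitions in Section~\ref{seccomp}. The governing requirement is that every vertex other than the root be dominated inside $X$.

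\textbf{State $\A$.} Here $a$ is chosen, so $b$ is already dominated by $a$. The vertex $b$ may then be chosen or not. This gives the sets $\{a\}$ and $\{a,b\}$, so
\[
\A_X=x+x^2=x(1+x).
\]

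\textbf{State $\B$.} Here $a$ is not chosen and must be dominated by a child, whose only candidate is $b$. So $b$ must be chosen, which also dominates $b$ itself. The only set is $\{b\}$, so $\B_X=x$.

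\textbf{State $\C$.} Here $a$ is not chosen and must not be dominated by any child. This forbids choosing $b$. But then $b$, whose only neighbor is $a$, is dominated by nothing inside $X$. No set qualifies, so $\C_X=0$.

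As a consistency check, I can apply the recurrences \eqref{eq:Arec}--\eqref{eq:Crec} with the single child $b$. A leaf child has $\A=x$, $\B=0$ and $\C=1$. This gives $\A_X=x(x+0+1)$, $\B_X=(x+0)-0=x$ and $\C_X=0$, matching the enumeration.

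There is no real obstacle here. The only step needing care is state $\C$, where the forced emptiness comes from $b$ having no neighbor other than $a$.
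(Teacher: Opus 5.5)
Your proof is correct and essentially matches the paper's: it enumerates the three states directly, and gets $\C_X=0$ because forbidding $b$ leaves $b$ undominated within $X$. Your extra consistency check is also correct: applying the recurrences with a leaf child ($\A=x$, $\B=0$, $\C=1$) reproduces the same three polynomials, though the lemma does not need it.
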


\begin{proof}
In state $\A$, either $\{a\}$ or $\{a,b\}$ is chosen, so $\A_X=x+x^2=x(1+x)$. In state $\B$, the root $a$ is not chosen and must be dominated by its child, so $b$ must be chosen; thus $\B_X=x$. State $\C$ is impossible, because if $a$ is left to be dominated by its parent then there is no way to dominate $b$ without choosing $a$ or $b$, and choosing $b$ would force $a$ into state $\B$ instead. Hence $\C_X=0$.
\end{proof}

\begin{lemma}\label{lem:gamma}
For the tree $W_{m,t}$, its smallest dominating set has cardinality $\gammaG(W_{m,t})$ where
\[
\gammaG(W_{m,t}) = m(3t+1)+1.
\]
\end{lemma}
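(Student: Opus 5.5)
One route is to use the rooted-state polynomials already computed, but a direct combinatorial argument is cleaner and I would give that, with the polynomial computation as a cross-check.

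\emph{Lower bound.} The tree $W_{m,t}$ contains the following vertex-disjoint closed neighbourhoods. In each of the $m$ copies of $H_t$ there are $3t$ copies of $L$, and each copy of $L=(u,v,w)$ has the leaf $w$, whose closed neighbourhood $\{v,w\}$ lies inside that copy. Each copy of $H_t$ also has its root, whose closed neighbourhood consists of the root itself, its parent $r$, and the roots $c$ of its three copies of $F_t$. Finally, the copy of $X$ has the leaf $b$, with closed neighbourhood $\{a,b\}$. These neighbourhoods are pairwise disjoint: the $H_t$-root neighbourhoods contain $r$ and the $c$'s but no vertex of any $L$ or of $X$; the leaf neighbourhoods lie inside their own $L$ or $X$; and distinct $H_t$-roots only share $r$ in the sense of being adjacent to it, since $r$ is not the root itself. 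More precisely, two $H_t$-roots both have $r$ in their closed neighbourhoods, so these sets are not disjoint. I would therefore instead use the closed neighbourhoods of the three $F_t$-roots $c$ in each copy of $H_t$, or simply count $m$ sets by taking for each $H_t$ the neighbourhood of its root minus $r$. That leaves the vertex $r$, which belongs to at most one remaining set and does not affect the count. It is cleanest to argue directly that each copy of $H_t$ must contain at least one chosen vertex among its root and the three $c$'s. Indeed, the root of $H_t$ must be dominated by itself, by $r$, or by a $c$. If only $r$ dominates it, the $c$'s still need domination by a chosen $u$, and that $u$ is an extra vertex beyond the $\{v,w\}$ count, since $u\notin\{v,w\}$. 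So in every copy of $H_t$ we need at least one vertex outside all the $\{v,w\}$ pairs, giving $m$ such vertices. Together with the $3tm$ vertices forced by the leaf pairs and one vertex from $\{a,b\}$, this gives $\gammaG\ge m(3t+1)+1$.

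\emph{Upper bound.} Choose every $v$ in every $L$, every $H_t$-root, and $a$. This set has size $m(3t+1)+1$. Each $v$ dominates its $u$ and $w$. Each $H_t$-root dominates its three $c$'s and $r$. Finally, $a$ dominates $b$ and $r$. Hence every vertex is dominated.

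\emph{Polynomial cross-check.} The paper's machinery gives the same conclusion. By \eqref{eq:Arec}--\eqref{eq:Crec} at $r$, the total dominating polynomial is $\A_W+\B_W$. Lemmas~\ref{lem:Ht} and~\ref{lem:X} show its lowest degree is $\min\bigl(1+m(3t+1)+1,\ m(3t+1)+1\bigr)$, where the second value is attained by the term $\A_X\prod\A_{H_t}$ in $\B_W$. Since $U_t(0)=1>0$, this term has nonzero constant coefficient, so $\gammaG=m(3t+1)+1$.

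The main obstacle is the lower-bound bookkeeping in the case where the root of $H_t$ is dominated only by $r$. Here one must check that the extra vertex charged to that copy of $H_t$ really lies outside every $\{v,w\}$ pair and is never counted for two different copies.
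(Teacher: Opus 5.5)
Your proof is correct in substance, but it takes a different route from the paper, and the lower-bound paragraph needs cleaning up because several of its sentences are false. The closed neighbourhoods of the $H_t$-roots all contain $r$, as you notice. The closed neighbourhoods of the three $F_t$-roots in one copy of $H_t$ pairwise share the $H_t$-root $h$, so they are not disjoint either. More seriously, it is not true that every dominating set meets $\{h,c_1,c_2,c_3\}$, i.e.\ that each copy of $H_t$ contains a chosen vertex among its root and the three $c$'s. For instance, the set consisting of $r$, $a$, every $v$, and one $u$ in each copy of $F_t$ is dominating and contains none of them.

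What your ``Indeed'' case analysis actually proves is the weaker true statement: each copy of $H_t$ contains a chosen vertex lying outside all the $\{v,w\}$ pairs. That is exactly what the count needs. The clean way to say it is as a packing. Fix one $F_t$-root $c$ in each copy of $H_t$. Then $N[c]=\{c,h,u_1,\dots,u_t\}$ lies inside that copy and misses every $\{v,w\}$ and $\{a,b\}$. So the $3tm$ pairs $\{v,w\}$, the $m$ sets $N[c]$, and $\{a,b\}$ are $m(3t+1)+1$ pairwise disjoint closed neighbourhoods, and every dominating set must meet each of them. Your upper-bound set (all $v$'s, all $H_t$-roots, and $a$) is fine.

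The paper instead reads the lower bound off the rooted-state polynomials. By Lemma~\ref{lem:Ht}, $\A_{H_t},\B_{H_t},\C_{H_t}$ start in degrees $3t+1,3t+3,3t+3$. Hence any dominating set has at least $3t+1$ vertices in each copy of $H_t$, and Lemma~\ref{lem:X} gives at least one more in $X$. Your polynomial cross-check is essentially this argument. You should add that the subtracted term $\B_X\B_{H_t}^m$ in $\B_{W_{m,t}}$ starts in degree $m(3t+3)+1$, so it cannot cancel the bottom coefficient.

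As for what each route buys: your packing argument is self-contained and independent of the generating-function computations. Together with your explicit dominating set, it gives a matching certificate for $\gammaG(W_{m,t})$. The paper's argument is a one-liner given Lemmas~\ref{lem:Ht} and~\ref{lem:X}, and it also sets up the normalization by $x^{\gammaG(W_{m,t})}$ used in Lemma~\ref{lem:er-growth}.
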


\begin{proof}
Each branch $H_t$ contributes at least $3t+1$ vertices in any dominating set meeting the branch, because $\A_{H_t}$ starts in degree $3t+1$ and both $\B_{H_t}$ and $\C_{H_t}$ start in degree $3t+3$ by Lemma~\ref{lem:Ht}. The extra branch $X$ contributes at least one vertex by Lemma~\ref{lem:X}. Hence every dominating set of $W_{m,t}$ has size at least $m(3t+1)+1$.

This lower bound is attained by leaving the top root $r$ unchosen, putting the $X$-branch in state $\B$, and putting each $H_t$-branch in state $\A$. Therefore
\[
\gammaG(W_{m,t}) = m(3t+1)+1.
\]
\end{proof}

Write
\[
D(W_{m,t};x)=x^{\gammaG(W_{m,t})}\sum_{r\ge 0} e_r(m,t)x^r.
\]
Thus $e_r(m,t)=d_{\gammaG(W_{m,t})+r}(W_{m,t})$.

\begin{lemma}\label{lem:er-growth}
Fix $m\ge 1$. For every integer $r$ with $0\le r\le 2m$,
\[
e_r(m,t)=2^{v_r t} t^{O(1)},
\qquad\text{where}\qquad
v_r = r+\left\lfloor \frac r2 \right\rfloor,
\]
where the implicit constants can depend on $m,r$ but not $t$.
\end{lemma}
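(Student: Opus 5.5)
The plan is to write the whole generating function $\sum_r e_r(m,t)x^r$ explicitly in terms of $U_t,V_t,R_t$, using the recurrences \eqref{eq:Arec}--\eqref{eq:Crec} at the top root $r$. Then I would read off growth rates coefficient by coefficient, using Lemmas~\ref{lem:coefbasic} and \ref{lem:Upowers}.

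\textbf{Step 1: closed form for $e_r$.} Since $r$ has no parent, $D(W_{m,t};x)=\A_W+\B_W$. By \eqref{eq:Arec}, Lemma~\ref{lem:Ht} and Lemma~\ref{lem:X},
\[
\A_W=x\bigl(\A_{H_t}+\B_{H_t}+\C_{H_t}\bigr)^m\bigl(\A_X+\B_X+\C_X\bigr)=x^{m(3t+1)+2}(2+x)\bigl(U_t+x^2(V_t+R_t^3)\bigr)^m .
\]
By \eqref{eq:Brec},
\[
\B_W=x^{m(3t+1)+1}\Bigl((2+x)(U_t+x^2V_t)^m-x^{2m}V_t^m\Bigr).
\]
Dividing by $x^{\gamma}$ with $\gamma=m(3t+1)+1$ from Lemma~\ref{lem:gamma} gives
\[
\sum_{r\ge0}e_r x^r=(2+x)(U_t+x^2V_t)^m-x^{2m}V_t^m+x(2+x)\bigl(U_t+x^2(V_t+R_t^3)\bigr)^m .
\]

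\textbf{Step 2: the main term.} Expand
\[
(U_t+x^2V_t)^m=\sum_{j=0}^m\binom mj x^{2j}V_t^jU_t^{m-j}.
\]
For fixed $j$, the coefficient of $x^{r}$ in $x^{2j}V_t^jU_t^{m-j}$ is a convolution over $a+b=r-2j$ of $[x^a]V_t^j$ and $[x^b]U_t^{m-j}$.
\begin{itemize}
\item By Lemma~\ref{lem:coefbasic}, each fixed coefficient of $V_t^j$ is $2^{3jt}t^{O(1)}$.
\item By Lemma~\ref{lem:Upowers}, $[x^b]U_t^{m-j}=2^{bt}t^{O(1)}$. This applies because $b\le r-2j\le 2(m-j)$ when $r\le 2m$. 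Coefficients of $U_t$ up to degree $b$ are also bounded above by $2^{bt}t^{O(1)}$ via $\min(\cdot,3)\le\cdot$.
\end{itemize}
So each summand is at most $2^{(3j+b)t}t^{O(1)}\le 2^{(r+j)t}t^{O(1)}$, with equality of exponent at $a=0$. Maximizing over $0\le j\le\lfloor r/2\rfloor$ gives the upper bound $2^{v_rt}t^{O(1)}$ with $v_r=r+\lfloor r/2\rfloor$. For the lower bound I take the single term $j=\lfloor r/2\rfloor$, $a=0$, $b=r-2j\in\{0,1\}$. It is a product of nonnegative quantities with exact growth $2^{3jt}\cdot 2^{bt}$ up to $t^{O(1)}$, and all other contributions are nonnegative.

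\textbf{Step 3: lower-order and negative terms.} The third summand contributes coefficients of $x^{r-1}$ and $x^{r-2}$ of $\bigl(U_t+x^2(V_t+R_t^3)\bigr)^m$. Since coefficients of $R_t^3$ are polynomial in $t$, $V_t+R_t^3$ obeys the same $2^{3t}t^{O(1)}$ bounds. The Step 2 argument then gives upper bounds $2^{v_{r-1}t}t^{O(1)}$ and $2^{v_{r-2}t}t^{O(1)}$, both at most $2^{v_rt}t^{O(1)}$; these terms are nonnegative anyway. The subtracted term $x^{2m}V_t^m$ matters only when $r=2m$, which I expect to be the one delicate point. There the $j=m$ piece of $(2+x)(U_t+x^2V_t)^m$ contributes $2[x^0]V_t^m$ to $e_{2m}$, so after subtraction $[x^0]V_t^m\cdot[x^0]U_t^0=[x^0]V_t^m=2^{3mt}t^{O(1)}>0$ remains. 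This is exactly the term used for the lower bound, since $v_{2m}=3m$. Combining the bounds gives $e_r(m,t)=2^{v_rt}t^{O(1)}$ for $0\le r\le 2m$, with constants depending only on $m$ and $r$.
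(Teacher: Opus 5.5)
Your proposal is correct and follows essentially the same route as the paper. You split by the state of the top root, expand $(U_t+x^2V_t)^m$ binomially, and use Lemmas~\ref{lem:coefbasic} and~\ref{lem:Upowers} to locate the dominant term at $j=\lfloor r/2\rfloor$. You then absorb the subtracted $x^{2m}V_t^m$ at $r=2m$ using the factor $2$ from $(2+x)$, and note that the root-$\A$ part carries an extra factor of $x$. You are, if anything, slightly more explicit than the paper about the convolution terms with $a>0$ and about $V_t+R_t^3$ obeying the same upper bounds as $V_t$.
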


\begin{proof}
We analyze the contributions from the top root in states $\B$ and $\A$ separately.

\smallskip
\noindent\textbf{Root in state $\B$.}
Using \eqref{eq:Brec} with one child-subtree $X$ and $m$ child-subtrees $H_t$, and recalling $\C_X=0$, we obtain
\[
\B_{W_{m,t}} = (\A_X+\B_X)(\A_{H_t}+\B_{H_t})^m - \B_X\,\B_{H_t}^m.
\]
By Lemmas~\ref{lem:Ht} and \ref{lem:X},
\[
\A_X+\B_X = 2x+x^2 = x(2+x),
\]
\[
\A_{H_t}+\B_{H_t}=x^{3t+1}U_t + x^{3t+3}V_t = x^{3t+1}(U_t+x^2V_t),
\]
\[
\B_{H_t}=x^{3t+3}V_t.
\]
Since $\gammaG(W_{m,t})=m(3t+1)+1$ by Lemma~\ref{lem:gamma}, this gives
\begin{equation}\label{eq:BW}
\B_{W_{m,t}} = x^{\gammaG(W_{m,t})}\Bigl((2+x)(U_t+x^2V_t)^m - x^{2m}V_t^m\Bigr).
\end{equation}
Expand
\begin{equation}\label{eq:expand-main}
(U_t+x^2V_t)^m = \sum_{j=0}^m \binom{m}{j} x^{2j} U_t^{m-j}V_t^j.
\end{equation}
Fix $r$ with $0\le r\le 2m$. In the term indexed by $j$, the coefficient of $x^r$ is governed by
\[
\coeff{x^{r-2j}}{U_t^{m-j}V_t^j}.
\]
By Lemmas~\ref{lem:coefbasic} and \ref{lem:Upowers}, this coefficient has size
\[
2^{(r-2j)t}t^{O(1)}\cdot 2^{3jt}t^{O(1)} = 2^{(r+j)t}t^{O(1)},
\]
provided $r-2j\ge 0$; otherwise the contribution is zero. Thus among all $j$ with $2j\leq r$, the exponential order is maximized when $j=\lfloor r/2\rfloor$, and the maximal exponent is
\[
r+\left\lfloor \frac r2\right\rfloor = v_r.
\]
Therefore the coefficient of $x^r$ in the first term of \eqref{eq:BW} has size $2^{v_r t}t^{O(1)}$.

The subtraction term $x^{2m}V_t^m$ in \eqref{eq:BW} only affects degree $r=2m$ among the range $0\le r\le 2m$. In that degree, the factor $(2+x)$ in the first term contributes two copies of the dominant $x^{2m}V_t^m$ contribution, so subtracting one copy leaves a positive contribution of the same exponential order. Hence the coefficient of $x^r$ in the normalized root-$\B$ contribution has size
\[
2^{v_r t}t^{O(1)}
\qquad (0\le r\le 2m).
\]

\smallskip
\noindent\textbf{Root in state $\A$.}
From \eqref{eq:Arec},
\[
\A_{W_{m,t}} = x(\A_X+\B_X+\C_X)(\A_{H_t}+\B_{H_t}+\C_{H_t})^m.
\]
By Lemmas~\ref{lem:Ht} and \ref{lem:X},
\[
\A_X+\B_X+\C_X = x(2+x),
\]
and
\[
\A_{H_t}+\B_{H_t}+\C_{H_t}
= x^{3t+1}U_t + x^{3t+3}(V_t+R_t^3)
= x^{3t+1}\bigl(U_t + x^2(V_t+R_t^3)\bigr).
\]
Thus
\begin{equation}\label{eq:AW}
\A_{W_{m,t}} = x^{\gammaG(W_{m,t})+1}(2+x)\bigl(U_t+x^2(V_t+R_t^3)\bigr)^m.
\end{equation}
Compared with \eqref{eq:BW}, there is an extra factor of $x$, so the contribution to $e_r(m,t)$ coming from \eqref{eq:AW} has exponential order at most $2^{v_{r-1}t}t^{O(1)}$ for $r\ge 1$, which is strictly smaller than $2^{v_rt}t^{O(1)}$ because $v_r>v_{r-1}$. For $r=0$, the root-$\A$ contribution vanishes anyway because of the extra factor of $x$.

Combining the two cases shows that the total normalized coefficient satisfies
\[
e_r(m,t)=2^{v_r t}t^{O(1)}
\qquad (0\le r\le 2m),
\]
as claimed.
\end{proof}

\subsection{Main Theorem: Arbitrarily Many Log-Concavity Violations}

We now restate Theorem \ref{main2} as Theorem \ref{thm:main} below, and prove it.

\begin{theorem}\label{thm:main}
Fix $m\ge 1$. Then for all sufficiently large $t$, the domination polynomial of $W_{m,t}$ has at least $m$ log-concavity failures. More precisely, if
\[
D(W_{m,t};x)=\sum_{s\ge \gammaG(W_{m,t})} d_s x^s,
\]
then for every $j=0,1,\dots,m-1$ one has
\[
d_{\gammaG(W_{m,t})+2j+1}^2 < d_{\gammaG(W_{m,t})+2j}\,d_{\gammaG(W_{m,t})+2j+2}
\]
for all sufficiently large $t$.
\end{theorem}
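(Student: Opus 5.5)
By Lemma~\ref{lem:gamma} and the definition of $e_r(m,t)$, we have $d_{\gammaG(W_{m,t})+s}=e_s(m,t)$. So the claimed inequality for index $j$ is equivalent to
\[
e_{2j+1}(m,t)^2<e_{2j}(m,t)\,e_{2j+2}(m,t),\qquad 0\le j\le m-1.
\]
All three indices $2j,2j+1,2j+2$ lie in the range $0\le r\le 2m$ covered by Lemma~\ref{lem:er-growth}. The plan is therefore a pure exponent comparison in $t$, with $m$ and $j$ fixed and $t\to\infty$.

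Lemma~\ref{lem:er-growth} gives $e_r(m,t)=2^{v_rt}t^{O(1)}$ with $v_r=r+\lfloor r/2\rfloor$. Here $t^{O(1)}$ is to be read as two-sided: there are constants $c_r,C_r>0$ and exponents $a_r,b_r$, depending only on $m$ and $r$, such that
\[
c_r t^{-a_r}2^{v_rt}\le e_r(m,t)\le C_r t^{b_r}2^{v_rt}
\]
for all large $t$. Computing the exponents:
\[
v_{2j}=3j,\qquad v_{2j+1}=3j+1,\qquad v_{2j+2}=3j+3.
\]
Hence $2v_{2j+1}=6j+2$ while $v_{2j}+v_{2j+2}=6j+3$. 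The left side is therefore at most $C\,t^{B}\,2^{(6j+2)t}$, and the right side is at least $c\,t^{-A}\,2^{(6j+3)t}$. Their ratio is at most $(C/c)\,t^{A+B}2^{-t}$, which tends to $0$. So for each $j$ there is a threshold $t_j$ beyond which the strict inequality holds. Taking $t\ge\max_{0\le j\le m-1}t_j$, a finite maximum, gives all $m$ failures simultaneously. Theorem~\ref{main2} then follows with $W_m:=W_{m,t}$ for any such $t$, since $W_{m,t}$ is a finite tree.

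The main obstacle is making sure Lemma~\ref{lem:er-growth} really provides a two-sided estimate, and in particular a lower bound of order $2^{v_rt}t^{-O(1)}$ for $e_{2j}$ and $e_{2j+2}$. The exponent gap is only one unit of $t$ in the power of $2$, so an upper bound alone is useless on the right-hand side. I would recheck two points.

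First, the lower bound comes from positivity. All coefficients in the $\B$-root contribution \eqref{eq:BW} are nonnegative except for the subtracted term $x^{2m}V_t^m$. The dominant term $j=\lfloor r/2\rfloor$ in \eqref{eq:expand-main} has the coefficient $\coeff{x^{r-2j}}{U_t^{m-j}V_t^j}$. Its lower bound follows from Lemma~\ref{lem:Upowers}, which needs $r-2j\le 2(m-j)$, i.e. $r\le 2m$, together with the positivity of the leading coefficient of $V_t$ from Lemma~\ref{lem:coefbasic}. Multiplying by the constant $2$ from $(2+x)$ keeps it positive.

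Second, at $r=2m$ the subtracted term must be dominated. Here I would argue as in the lemma: the coefficient $\coeff{x^0}{V_t^m}$ appears with weight $2$ from the factor $(2+x)$ and only weight $1$ in the subtraction, so the net coefficient is still at least $\coeff{x^0}{V_t^m}$, which is of order $2^{3mt}t^{-O(1)}$.

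The root-$\A$ contribution \eqref{eq:AW} is nonnegative, so it only helps the lower bound. For the upper bound its exponent is at most $v_{r-1}<v_r$, so it does not spoil the estimate on $e_{2j+1}$.
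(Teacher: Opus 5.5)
Your proposal is correct and follows the paper's proof essentially verbatim. You invoke Lemma~\ref{lem:er-growth}, compute $v_{2j}=3j$, $v_{2j+1}=3j+1$, $v_{2j+2}=3j+3$, and compare $2^{(6j+3)t}$ with $2^{(6j+2)t}$ up to polynomial factors in $t$. Your insistence on reading $t^{O(1)}$ two-sidedly, and your recheck of the lower bound at $r=2m$ (weight $2$ from the factor $(2+x)$ against weight $1$ in the subtracted term), is exactly what the paper's step ``ratio $=2^t t^{O(1)}\to\infty$'' implicitly uses.
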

\begin{proof}
By Lemma~\ref{lem:er-growth}, for each $0\le j\le m-1$,
\[
e_{2j}(m,t)=2^{(2j+j)t}t^{O(1)}=2^{3jt}t^{O(1)},
\]
\[
e_{2j+1}(m,t)=2^{(2j+1+j)t}t^{O(1)}=2^{(3j+1)t}t^{O(1)},
\]
\[
e_{2j+2}(m,t)=2^{(2j+2+j+1)t}t^{O(1)}=2^{(3j+3)t}t^{O(1)}.
\]
Therefore
\[
e_{2j}(m,t)e_{2j+2}(m,t)=2^{(6j+3)t}t^{O(1)},
\]
while
\[
e_{2j+1}(m,t)^2=2^{(6j+2)t}t^{O(1)}.
\]
Consequently, as $t\to\infty$,
\[
\frac{e_{2j}(m,t)e_{2j+2}(m,t)}{e_{2j+1}(m,t)^2}=2^t t^{O(1)}\to\infty.
\]
Hence for all sufficiently large $t$,
\[
e_{2j+1}(m,t)^2 < e_{2j}(m,t)e_{2j+2}(m,t).
\]
Since $e_r(m,t)=d_{\gammaG(W_{m,t})+r}(W_{m,t})$, this is exactly
\[
d_{\gammaG(W_{m,t})+2j+1}^2 < d_{\gammaG(W_{m,t})+2j}\,d_{\gammaG(W_{m,t})+2j+2},
\]
for each $j=0,1,\dots,m-1$. Thus the domination polynomial has at least $m$ log-concavity failures, occurring at the indices
\[
\gammaG(W_{m,t})+1,\ \gammaG(W_{m,t})+3,\ \dots,\ \gammaG(W_{m,t})+2m-1.
\]
\end{proof}

\begin{remark}
The theorem proves the existence of at least $m$ failure indices. It does not assert that these are the only failures.
\end{remark}

\newpage 

\section{Examples of Graphs}\label{secgraph}

\begin{figure}[ht!]
    \centering
    \includegraphics[width=\textwidth]{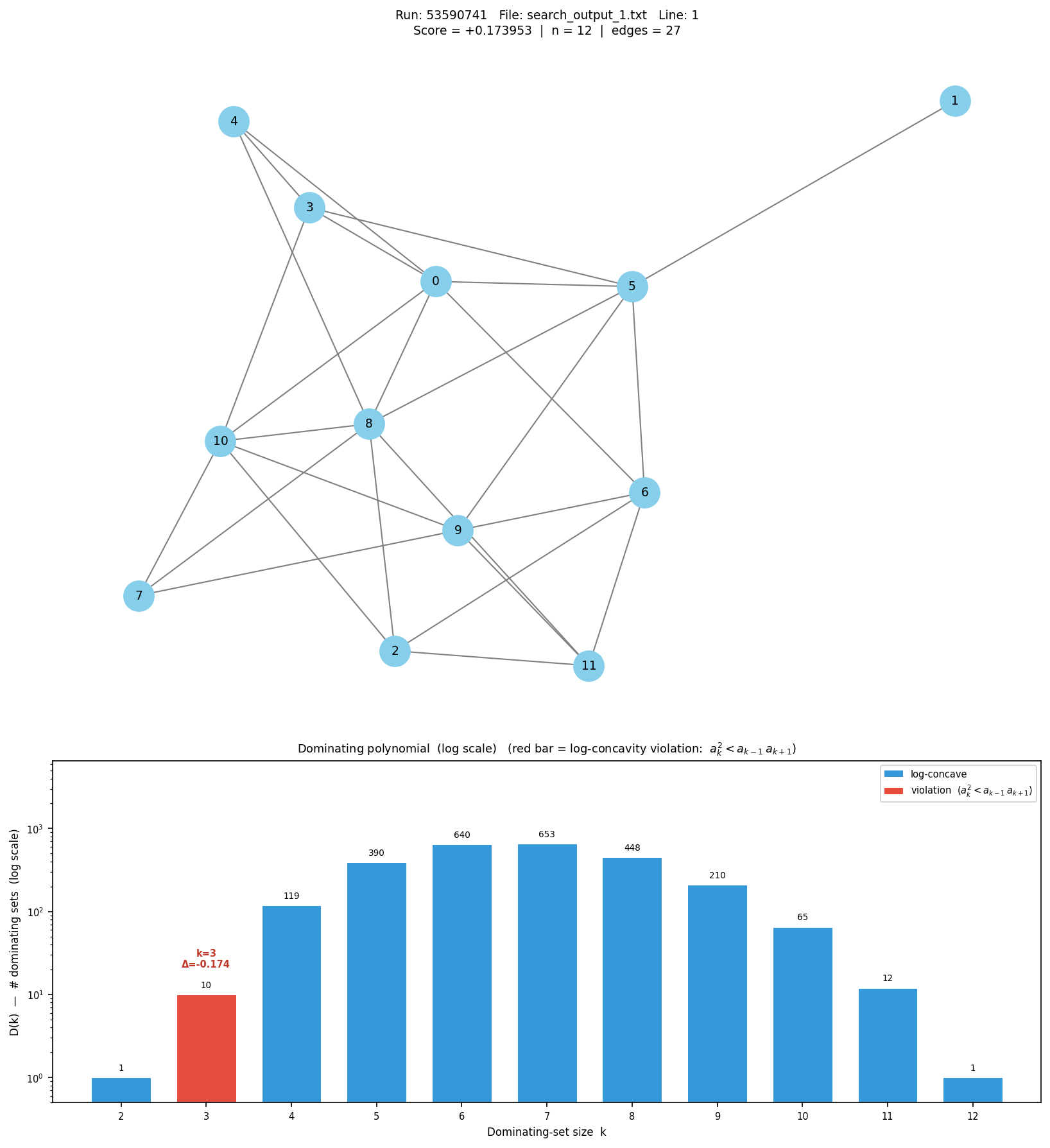}
    \caption{A 12-vertex graph with a non log-concave dominating set sequence}
    \label{fig1}
\end{figure}

\begin{figure}[ht!]
    \centering
    \includegraphics[width=\textwidth]{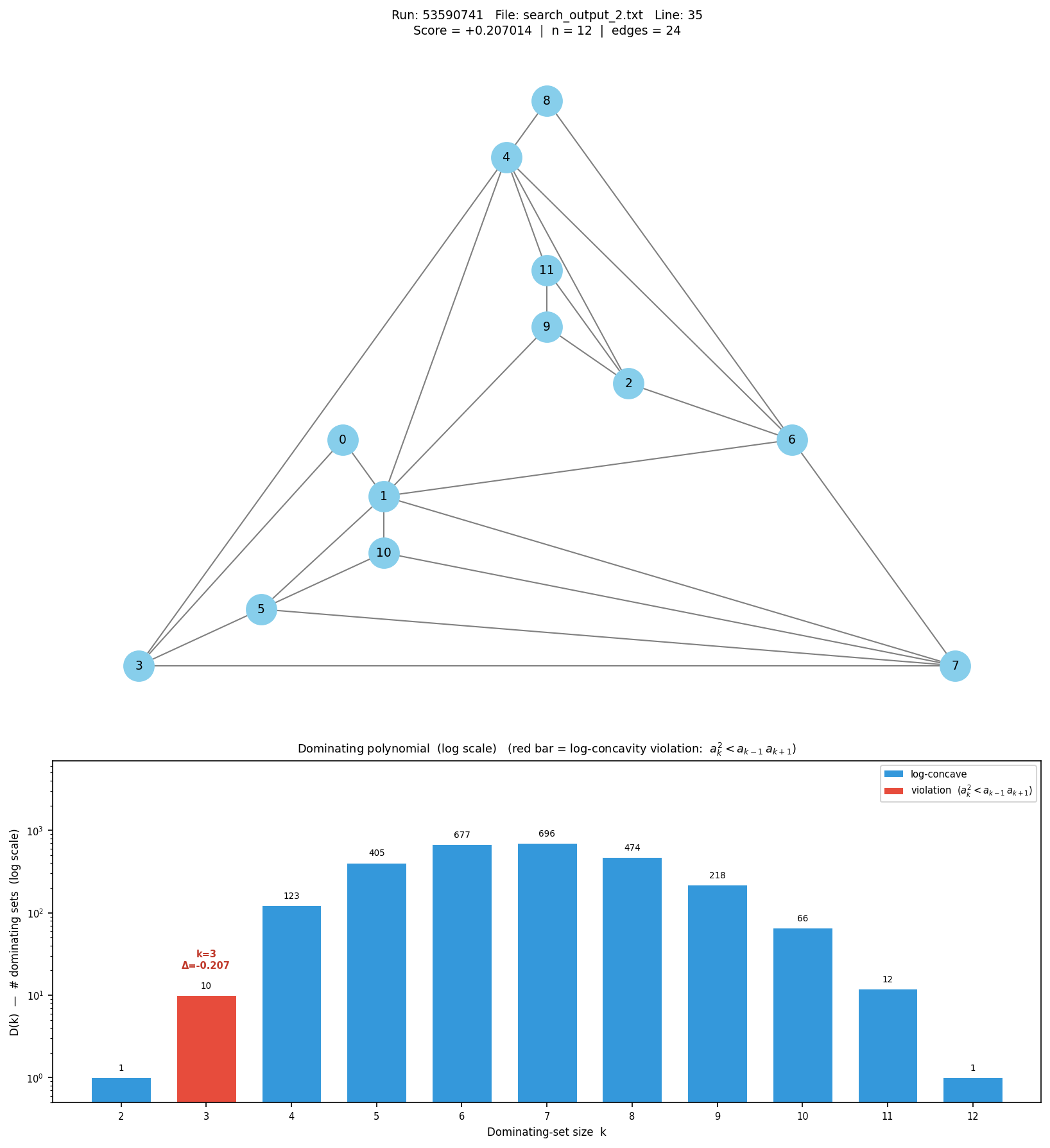}
\caption{A 12-vertex graph with a non log-concave dominating set sequence}
    \label{fig2}
\end{figure}

\begin{figure}[ht!]
    \centering
    \includegraphics[width=\textwidth]{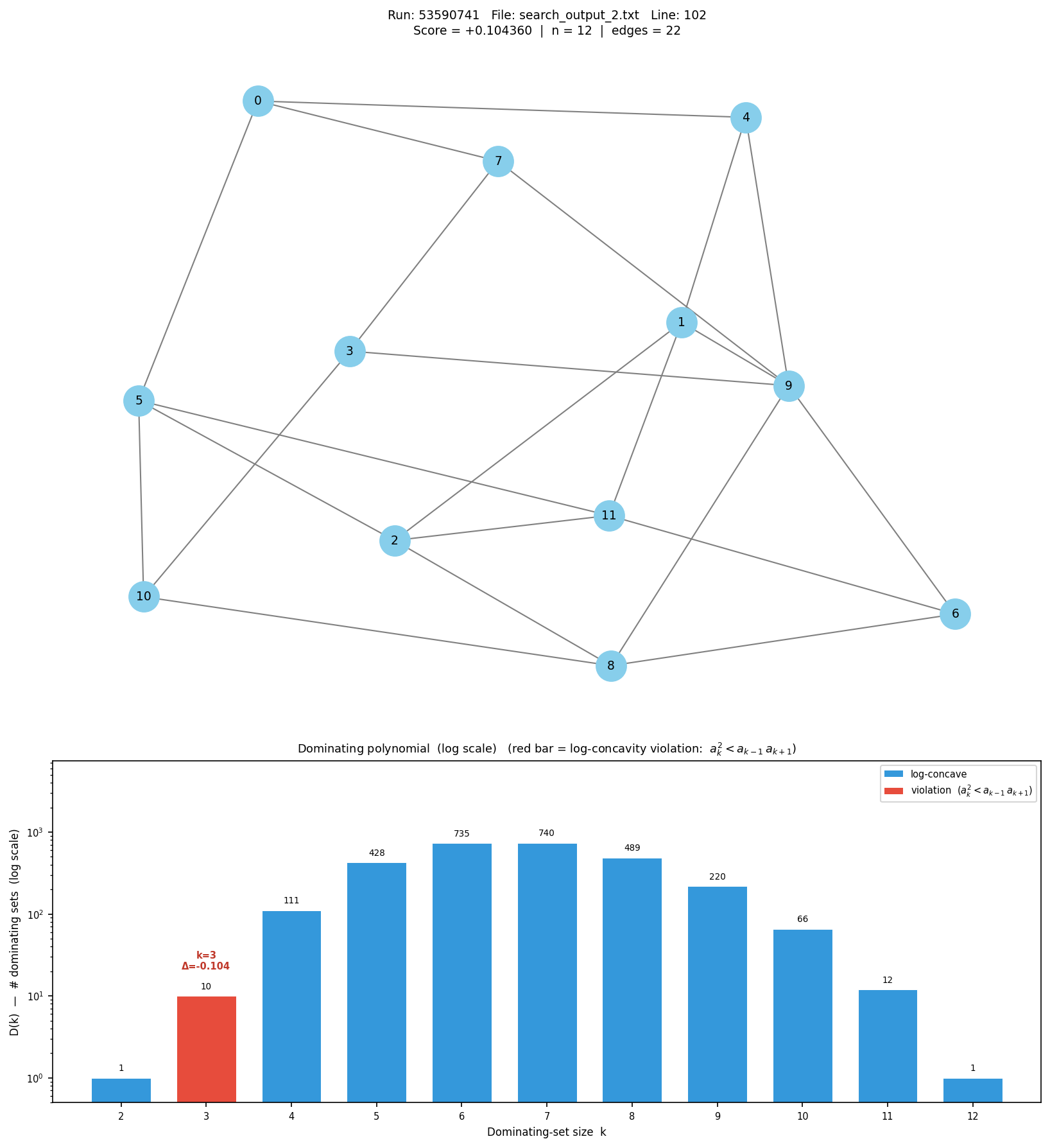}
  \caption{A 12-vertex graph with a non log-concave dominating set sequence}
    \label{fig3}
\end{figure}

\clearpage

\section{Examples of Trees}\label{sectree}

\begin{figure}[ht!]
    \centering
    \includegraphics[width=\textwidth]{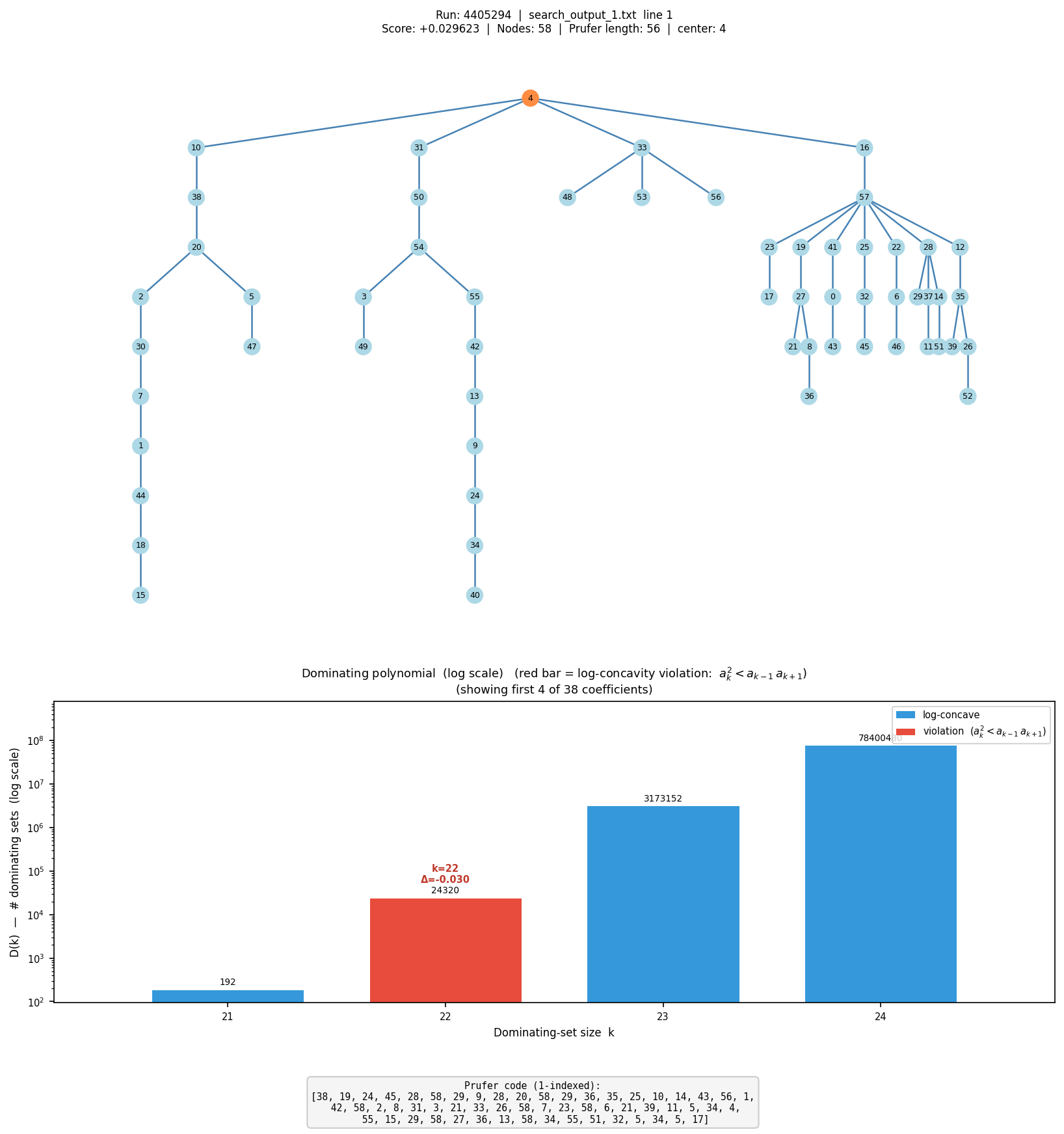}
  \caption{A 58-vertex tree with a non log-concave dominating set sequence}
    \label{fig4}
\end{figure}

\begin{figure}[ht!]
    \centering
    \includegraphics[width=\textwidth]{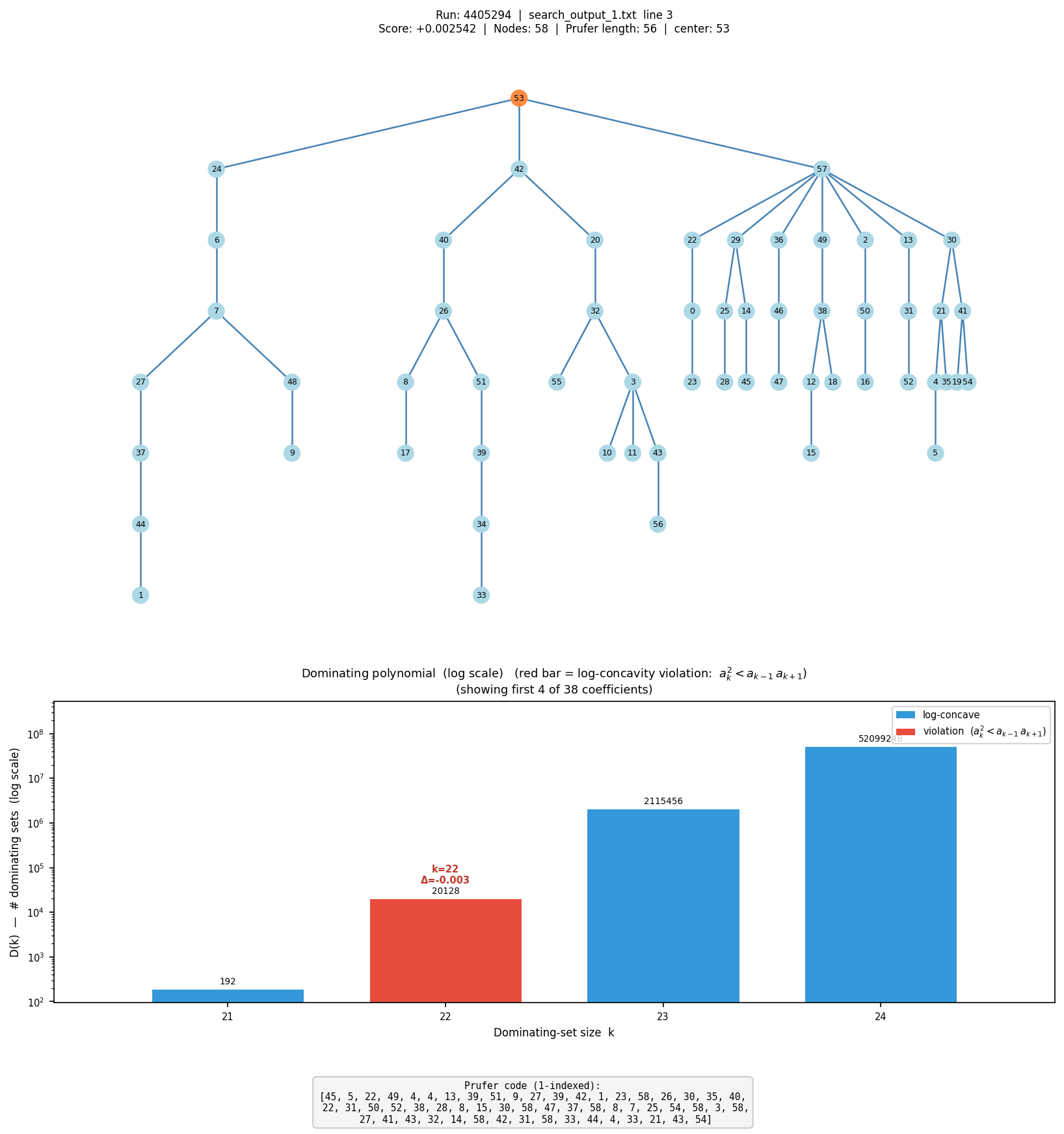}
  \caption{A 58-vertex tree with a non log-concave dominating set sequence}
    \label{fig5}
\end{figure}

\begin{figure}[ht!]
    \centering
    \includegraphics[width=\textwidth]{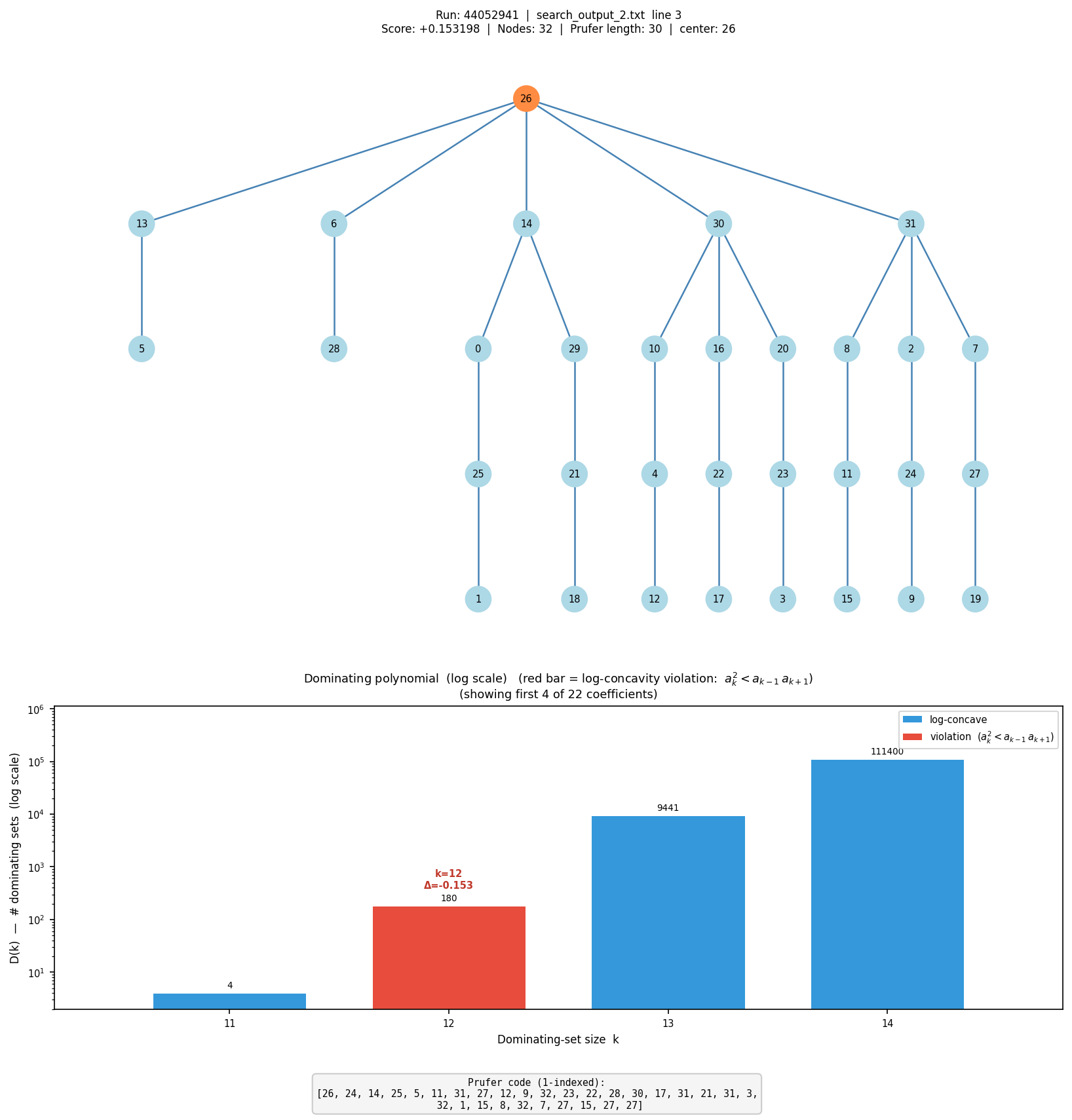}
  \caption{A 32-vertex tree with a non log-concave dominating set sequence}
    \label{fig6}
\end{figure}

\clearpage






\clearpage

\textbf{Acknowledgement}.  Thanks to Eric Ramos for detailed discussions on the PatternBoost codes.  Thanks also to Ferenc Bencs, David Galvin and Elchanan Mossel for helpful discussions.  Thanks also to Geordie Williamson for suggesting to use Patternboost on our problem.

\providecommand{\bysame}{\leavevmode\hbox to3em{\hrulefill}\thinspace}
\providecommand{\MR}{\relax\ifhmode\unskip\space\fi MR }
\providecommand{\MRhref}[2]{%
  \href{http://www.ams.org/mathscinet-getitem?mr=#1}{#2}
}
\providecommand{\href}[2]{#2}


\begin{thebibliography}{HHK{\etalchar{+}}25}


\bibitem[AHK18]{karim18}
Karim Adiprasito, June Huh and Eric Katz.
\emph{Hodge theory for combinatorial geometries}.  Annals of Mathematics. \textbf{2} (2018), vol. 188, pp. 381--452.

\bibitem[AMSE87]{alavi87}
Yousef Alavi, Paresh J. Malde, Allen J. Schwenk, and Paul Erd\H{o}s. \emph{The vertex independence sequence of a graph is not constrained}, Eighteenth Southeastern International Conference on Combinatorics, Graph Theory, and Computing (Boca Raton, Fla.), (1987) vol. 58, pp. 15--23.

\bibitem[AP14]{ali14}
S. Alikhani and Y. H. Peng. \emph{Introduction to domination polynomial of a graph}. Ars Combinatoria, \textbf{5} (2014), vol. 114, pp. 257–-266.


\bibitem[BES18]{BES2018}
P.~Bahls, B.~Ethridge and L.~Szabo.
\emph{Unimodality of the independence polynomials of non-regular caterpillars},
Australas. J. Combin. \textbf{1} (2018), vol. 71, pp. 104--112.


\bibitem[B25]{bautista25}
C\'{e}sar Bautista-Ramos.
\emph{Multiple breaks of log-concavity in the independence polynomials of trees}.
Preprint (2025), \href{https://arxiv.org/abs/2511.00334}{arXiv:2511.00334}.

\bibitem[B21]{beaton21}
I. Beaton. On dominating sets and the domination polynomial. Ph.D. Thesis. 2021.

\bibitem[BB22]{beaton22}
Beaton, I., Brown, J.I. \emph{On the Unimodality of Domination Polynomials}. Graphs and Combinatorics \textbf{3} (2022), vol. 38, pp. 1--11.

\bibitem[BG21]{basit21}
Abdul Basit and David Galvin.  \emph{On the Independent Set Sequence of a Tree}. The Electronic Journal of Combinatorics. \textbf{3} (2021), vol. 28. pp. 1--23.

\bibitem[BS24]{beaton24}
Iain Beaton and Sam Schoonhoven.
\emph{On the unimodality of nearly well-dominated trees}.
Preprint (2024), \href{https://arxiv.org/abs/2411.02288}{arxiv:2411.02288}.

\bibitem[CEWW24]{charton24}
Fran\c{c}ois Charton, Jordan S. Ellenberg, Adam Zsolt Wagner, Geordie Williamson.
\emph{PatternBoost: Constructions in Mathematics with a Little Help from AI}.
Preprint (2024), \href{https://arxiv.org/abs/2411.00566}{arxiv:2411.00566}.


\bibitem[GH18]{galvin2018}
David~Galvin and Justin~Hilyard.
\emph{The independent set sequence of some families of trees}, Australas. J. Combin., \textbf{2} (2018), vol. 70, pp. 236--252.

\bibitem[HKM10]{harsha10}
Prahladh Harsha, Adam Klivans and Raghu Meka. \emph{An invariance principle for polytopes}.  Proceedings of the Forty-Second ACM Symposium on Theory of Computing (2010).  pp. 543–552.

\bibitem[Hei25]{heilman25}
Steven Heilman. \emph{Independent Sets of Random Trees and Sparse Random Graphs}.  Journal of Graph Theory, \textbf{3} (2025), vol. 109, pp. 1--28.

\bibitem[KL23]{kadrawi23}
Ohr Kadrawi and Vadim E. Levit. 
\emph{The independence polynomial of trees is not always log-concave starting from order 26}. Ars Mathematica Contemporanea. \textbf{4} (2025), vol. 25, pp. 1--23.

\bibitem[KPT17]{kotek17}
Tomer Kotek, James Preen and Peter Tittmann.  \emph{Domination Polynomials of Graph Products}.  Journal of Combinatorial Mathematics and Combinatorial Computing. \textbf{1} (2017), vol. 101, pp. 245--258. 

\bibitem[RS25]{ramos25}
Eric Ramos, Sunny Sun.
\emph{An AI enhanced approach to the tree unimodality conjecture}.
Preprint (2025), \href{https://arxiv.org/abs/2510.18826}{arXiv:2510.18826}.

\bibitem[WZ11]{WangZhu2011}
Yi~Wang and Bao-Xuan~Zhu.
\emph{On the unimodality of independence polynomials of some graphs}.
European J. Combin. \textbf{1} (2011), vol. 32, pp. 10--20.

\bibitem[Zhu07]{Zhu2007}
Zhi-Feng~Zhu.
\emph{The unimodality of independence polynomials of some graphs}.
Australas. J. Combin. \textbf{1} (2007), vol. 38, pp. 27--33.



\end{thebibliography}
\end{document}